\mathchardef\mhyphen="2D
\theoremstyle{plain}
\newtheorem{theorem}{Theorem}[section]
\newtheorem{proposition}[theorem]{Proposition}
\newtheorem{lemma}[theorem]{Lemma}
\theoremstyle{definition}
\newtheorem{definition}[theorem]{Definition}
\newtheorem{remark}[theorem]{Remark}
\theoremstyle{remark}
\newcommand\vx{\boldsymbol{x}}
\newcommand\vy{\boldsymbol{y}}
\newcommand\by{\boldsymbol{y}}
\newcommand\vv{\boldsymbol{v}}
\newcommand\s{\boldsymbol{s}}
\newcommand\z{\boldsymbol{z}}
\newcommand\GG{\mathcal{G}}
\noindent\makebox[0mm][r]{$\bullet$}}
\begin{document}

\title{Minimal invariant regions and minimal globally attracting regions for variable-$k$ reaction systems}

\author[1]{Yida Ding}
\author[2]{Abhishek Deshpande}
\author[3]{Gheorghe Craciun}
\affil[1]{Department of Mathematics, University of Wisconsin-Madison, {\tt yding54@wisc.edu}.}
\affil[2]{Center for Computational Natural Sciences and Bioinformatics, International Institute of Information Technology Hyderabad, {\tt abhishek.deshpande@iiit.ac.in}.}
\affil[3]{Department of Mathematics and Department of Biomolecular Chemistry, University of Wisconsin-Madison, {\tt craciun@math.wisc.edu}.}

\maketitle

\begin{abstract}
The structure of invariant regions and globally attracting regions is fundamental to understanding the dynamical properties of reaction network models. We  describe an explicit  construction of the minimal invariant regions and minimal globally attracting regions for dynamical systems consisting of two reversible reactions, where the rate constants are allowed to vary in time within a bounded interval. 
\end{abstract}

\section{Introduction}

Reaction networks are ubiquitous in several mathematical models arising in biology, physics and chemistry. These models often incorporate differential equations with polynomial or power-law right hand sides~\cite{savageau1969biochemical} of the form given by
\begin{equation}\label{eq:power_law_dyn_system}
\frac{d\vx}{dt}=\sum\limits_{i=1}^m  k_i {\vx}^{\s_i} \vv_i
\end{equation}
where $\vx = (x_1, x_2, ..., x_n) \in \mathbb{R}^n_{>0}$, $\s_i, \vv_i\in\mathbb{R}^n$, and $\vx^{\vy }:=x_1^{y_{1}}x_2^{y_{2}}...x_n^{y_{n}}$.

Associated with such dynamical systems is a property called \emph{persistence} which implies that no species can go extinct, i.e., $\displaystyle\liminf x_i(t)>0$ for all $i$. The property of persistence is related to some of the most important open problems in reaction network theory, such as  the \emph{Persistence Conjecture} and the \emph{Global Attractor Conjecture}. 
Several special cases of these conjectures have been proved in the last few years, but many important problems are still open~\cite{anderson2011proof,craciun2013persistence,gopalkrishnan2014geometric,pantea2012persistence}. It is therefore important to analyze invariant regions and globally attracting regions for these systems.

In general rate constants associated with reactions can vary in some range due to the change in environment like the change in pressure, temperature or external signals, etc. Mathematically, this means that the rate constants $k_i(t)$ are functions of time. In particular, if we set constraints on the rate constants to lie in a bounded interval, these systems are called variable-$k$ dynamical systems. For example, consider the following network
\begin{eqnarray}\label{ex:network}
\begin{aligned}
 Y &\xrightleftharpoons[k_2(t)]{k_1(t)} 2 X \\
 X &\xrightleftharpoons[k_4(t)]{k_3(t)} 2 Y 
\end{aligned}
\end{eqnarray}
If the rate constants satisfy $\epsilon\leq k_1(t), k_2(t), k_3(t), k_4(t)\leq \frac{1}{\epsilon}$, then the dynamical systems generated by networks like~\ref{ex:network} are called variable-$k$ dynamical systems. In this paper, we give an explicit construction of the minimal invariant regions and minimal globally attracting regions for variable-$k$ dynamical systems generated by two dimensional reversible reaction networks similar to network~\ref{ex:network} described above.

This paper is structured as follows: In Section~\ref{sec:persistence_permanence}, we introduce the notions of persistence, permanence and uncertainty regions. In Section~\ref{sec:invariant_globally}, we formally define the notions of minimal invariant regions and the minimal globally attracting regions for our dynamical systems. In Section~\ref{min_invariant_globally}, we give an explicit construction of the minimal invariant region and the minimal globally attracting region for two dimensional variable-$k$ dynamical systems.

\section{E-graphs, Persistence and Permanence}\label{sec:persistence_permanence}

A {\em reaction network} is a  directed graph $\GG=(V,E)$, with a finite set of vertices $V \subset \mathbb R^n$ and a set of edges $E \subset V \times V$. Such a graph is also called  an {\em Euclidean embedded graph} (or {\em E-graph})~\cite{craciun2019polynomial}. If there is an edge from the vertex $\by$ to the vertex $\by'$ in an E-graph, we will also denote this by the {\em reaction} $\by\rightarrow \by'$ (i.e, the {\em reactions} are just the edges of $\GG$). We will say that an E-graph $\GG=(V,E)$ is \emph{reversible} if for every edge $\by\rightarrow \by'$, there exists an edge $\by'\rightarrow \by$. We will say that an E-graph is \emph{weakly reversible} if every edge is part of a cycle. The {\em reaction vector} of a reaction $\by\rightarrow \by'$ is the vector $\by' - \by$. The span of the reaction vectors is the  \emph{stoichiometric subspace} $S$ of $\GG$, i.e., it is given by $S=\{\rm{span}(\by'-\by)\,|\,\by\rightarrow \by'\in E\}$. If we fix some  $\vx_0\in\mathbb{R}^n_{>0}$ then the \emph{stoichiometric compatibility class} (denoted by $\mathcal{C}$) corresponding to $\vx_0$ is given by $\mathcal{C}=(\vx_0 + S)\cap \mathbb{R}^n_{>0}$.

Every reaction network generates a family of dynamical systems on the positive orthant. If we assume mass-action kinetics~\cite{adleman2014mathematics,voit2015150,waage1986studies,gunawardena2003chemical, yu2018mathematical}, the dynamical systems generated by a reaction network are given by
\begin{eqnarray}\label{eq:mass_acction}
\frac{d\vx}{dt}=\sum\limits_{\vy\rightarrow \vy ' \in E} k_{\vy\rightarrow \vy '} {\vx}^{\vy} (\vy ' - \vy) 
\end{eqnarray}
where $k_{\vy\rightarrow \vy '} > 0$ is the {\em rate constant} of the reaction $\vy\rightarrow \vy '$. In general, rate constants can be time-dependent to accommodate the uncertainty introduced by external influences. In this case, the reaction network generates \emph{non-autonomous} dynamical systems given by 
\begin{eqnarray}\label{eq:non_autonomous}
\frac{d\vx}{dt}=\sum\limits_{\vy\rightarrow \vy ' \in E} k_{\vy\rightarrow \vy '}(t) {\vx}^{\vy} (\vy ' - \vy) 
\end{eqnarray}
In particular, if the rate constants corresponding to the reactions are allowed to take values in the bounded interval $[\epsilon,\frac{1}{\epsilon}]$ for some $\epsilon>0$, then the dynamical systems they generate are called \emph{variable-$k$ dynamical systems}~\cite{craciun2013persistence,craciun2015toric}.

We now define some important dynamical properties of reaction networks.

\begin{definition}[Persistence]\label{defn:persistent}
Consider a dynamical system of the form~(\ref{eq:non_autonomous}). This dynamical system is said to be \emph{persistent} if for any initial condition $\vx(0)\in\mathbb{R}^n_{>0}$, the solution $\vx(t)$ of~(\ref{eq:non_autonomous}) satisfies 
\begin{eqnarray}
\displaystyle\liminf_{t\rightarrow T_{\rm max}}\vx_i(t)>0 
\end{eqnarray}
for all $i=1,2,...,n$ where $T_{\rm max}$ is the maximal time for which the solution exists.
\end{definition}

\begin{definition}[Permanence]\label{defn:permanent}
Consider a dynamical system of the form~(\ref{eq:non_autonomous}). This dynamical system is said to be \emph{permanent} if for each stoichiometric compatibility class $\mathcal{C}$, there exists a compact set $\mathcal{D}\subseteq \mathcal{C}$ and a time $T>0$ such that for any solution $\vx(t)$ of~(\ref{eq:non_autonomous}) with $\vx(0)\in\mathcal{C}$, we have $\vx(t)\in \mathcal{D}$ for all $t\geq T$.
\end{definition}

\begin{definition}[Detailed balance]\label{defn:detailed_balance}
Consider a dynamical system of the form~(\ref{eq:non_autonomous}) generated by a reversible E-graph $\GG=(V,E)$. This dynamical system is said to be \emph{detailed balanced} if there exists ${\vx}_0\in\mathbb{R}^n_{>0}$ such that the following holds for every reversible reaction $\vy\rightleftharpoons \vy' \in E$:
\begin{eqnarray}
\displaystyle\sum_{\vy\rightarrow \vy' \in E}k_{\vy\rightarrow \vy'}{\vx}_0^{\vy} = \displaystyle\sum_{\vy'\rightarrow \vy\in E}k_{\vy'\rightarrow \vy}{\vx}_0^{\vy'}.
\end{eqnarray}
\end{definition}

\begin{definition}[Complex balance]\label{defn:complex_balance}
Consider a dynamical system of the form~(\ref{eq:non_autonomous}) generated by an E-graph $\GG=(V,E)$. This dynamical system is said to be \emph{complex balanced} if there exists ${\vx}_0\in\mathbb{R}^n_{>0}$ such that the following holds for every vertex $\vy\in V$:
\begin{eqnarray}
\displaystyle\sum_{\vy\rightarrow \vy' \in E}k_{\vy\rightarrow \vy'}{\vx}_0^{\vy} = \displaystyle\sum_{\vy'\rightarrow \vy\in E}k_{\vy'\rightarrow \vy}{\vx}_0^{\vy'}.
\end{eqnarray}
\end{definition}

\bigskip

\noindent
In what follows, we state some of the most important open problems in reaction network theory~\cite{craciun2013persistence}:

\begin{enumerate}
\item \textbf{Persistence conjecture:} Any dynamical system generated by a weakly reversible E-graph is persistent.
\item \textbf{Extended Persistence conjecture:} Any variable-$k$ dynamical system generated by an endotactic E-graph is persistent.
\item \textbf{Permanence conjecture:} Any dynamical system generated by a weakly reversible E-graph is permanent.
\item \textbf{Extended Permanence conjecture:} Any variable-$k$ dynamical system generated by an endotactic E-graph is permanent.
\end{enumerate}

The above conjectures are very closely related to the \emph{Global Attractor Conjecture}, which states that complex balanced dynamical systems have a globally attracting fixed point~\cite{craciun2009toric}.  In particular, the proof of any one of these four conjectures would also imply a proof of the Global Attractor Conjecture~\cite{craciun2009toric,siegel2000global,sontag2001structure}. Several special cases of these conjectures have been proved. Craciun, Nazarov and Pantea~\cite{craciun2013persistence} have proved the extended permanence conjecture in two dimensions. This has been extended by Pantea~\cite{pantea2012persistence} to the case of E-graphs with two dimensional stoichiometric subspace. Anderson~\cite{anderson2011proof} has proved the Global Attractor Conjecture for E-graphs consisting of a single connected component. Gopalkrishnan, Miller and Shiu~\cite{gopalkrishnan2014geometric} have shown that variable-$k$ dynamical systems generated by strongly endotactic E-graphs are permanent. A fully general proof of the Global Attractor Conjecture has recently been proposed by Craciun~\cite{craciun2015toric}. An essential component of the proof relies on building invariant regions for certain dynamical systems. In this paper, we give an explicit construction of the \emph{minimal invariant regions} and \emph{minimal globally attracting regions} for variable-$k$ dynamical systems generated by two reversible reactions.

\begin{definition}[Uncertainty region]
Consider the reversible reaction
\begin{eqnarray}\label{eq:example_reaction}
\begin{aligned}
a X + b Y &\xrightleftharpoons[k_2(t)]{k_1(t)} a'X + b'Y 
\end{aligned}
\end{eqnarray}
Now consider the variable-$k$ dynamical systems generated by this reversible reaction if we choose rate constants as follows:
\begin{enumerate}
\item $k_1(t)=\epsilon$ and $k_2(t)=\frac{1}{\epsilon}$.
\item $k_1(t)=\frac{1}{\epsilon}$ and $k_2(t)=\epsilon$.
\end{enumerate}
The condition for these dynamical systems to be detailed balanced gives the curves $y^{b'-b}=\epsilon^2x^{a-a'}$ and 
$y^{b'-b}=\frac{1}{\epsilon^2}x^{a-a'}$ respectively. The \emph{uncertainty region} corresponding to the reaction~(\ref{eq:example_reaction}) is the region enclosed between the curves $y^{b'-b}=\epsilon^2x^{a-a'}$ and 
$y^{b'-b}=\frac{1}{\epsilon^2}x^{a-a'}$.
\end{definition}

\begin{definition}[Attracting directions of an uncertainty region]
Consider an uncertainty region corresponding to the reaction given by
\begin{eqnarray}\label{eq:attracting_reaction}
\begin{aligned}
a X + b Y &\xrightleftharpoons[k_2(t)]{k_1(t)} a'X + b'Y 
\end{aligned}
\end{eqnarray}
Note that this uncertainty region divides the positive orthant into three connected components, as shown in Figure~\ref{fig:attr_dir}. For components that lie outisde the uncertainty region, the attracting direction is the direction perpendicular to the line $(b'-b)y=(a'-a)x$ and points towards this uncertainty region. Within the uncertainty region, the attracting direction is perpendicular to the line $(b'-b)y=(a'-a)x$ (i.e. parallel to the reaction vector). The exact direction will be determined by the choice of rate constants in Equation~(\ref{eq:attracting_reaction}). 

\end{definition}

\begin{figure}[h!]
\centering
\includegraphics[scale=0.55]{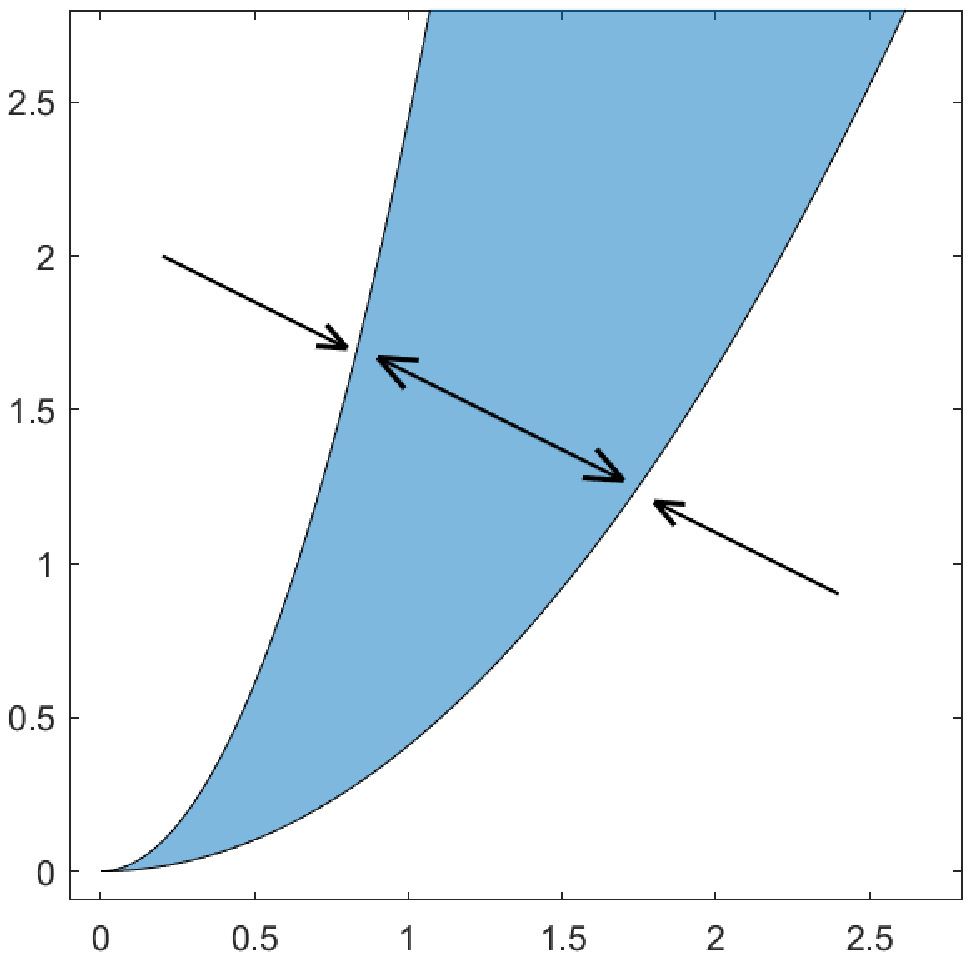}
\includegraphics[scale=0.55]{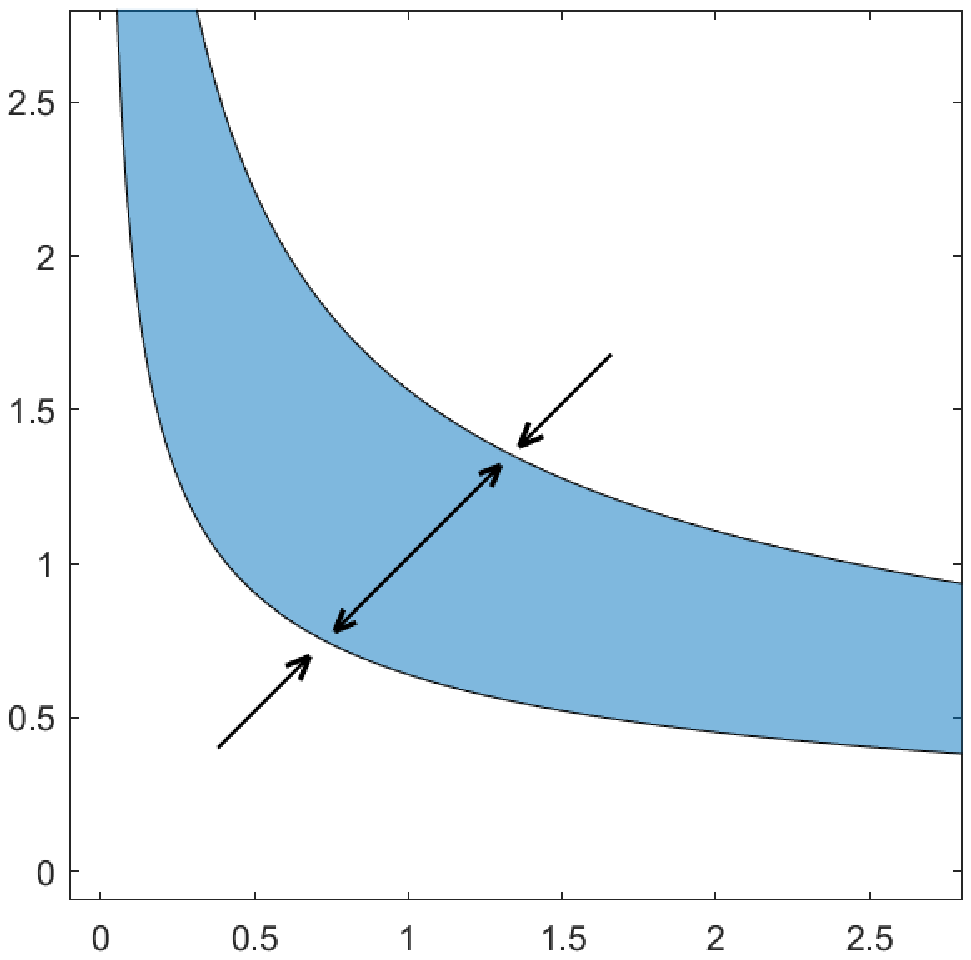}
\caption{Attracting directions corresponding to the uncertainty region of a reversible reaction. On the left we show the case where the reaction has {\em negative} slope, and on the right we show the case where the reaction has {\em positive} slope.}
\label{fig:attr_dir}
\end{figure}

\begin{remark}
It is easy to see in Figure~\ref{fig:attr_dir} that, for networks that consist of a \underline{single} reversible reaction, the blue regions are {\em minimal globally attracting regions}. Moreover, any (line segment) obtained as the intersection between a blue region and a stoichiometric compatibility class is a {\em minimal invariant region}. Our goal in this paper is {\em to solve this problem for the simplest nontrivial case: the case of \underline{two} reversible reactions.}
\end{remark}

\section{Variable\mbox{-}k reaction systems given by two reversible reactions}\label{sec:invariant_globally}

Considering the following reaction network:

\begin{eqnarray}\label{eq:variable-k}
\begin{aligned}
a_1 X + b_1 Y &\xrightleftharpoons[k_2(t)]{k_1(t)} a'_1 X + b'_1 Y \\
a_2 X + b_2 Y &\xrightleftharpoons[k_4(t)]{k_3(t)} a'_2 X + b'_2 Y 
\end{aligned}
\end{eqnarray}

\begin{figure}[h!]
\centering
\includegraphics[scale=0.6]{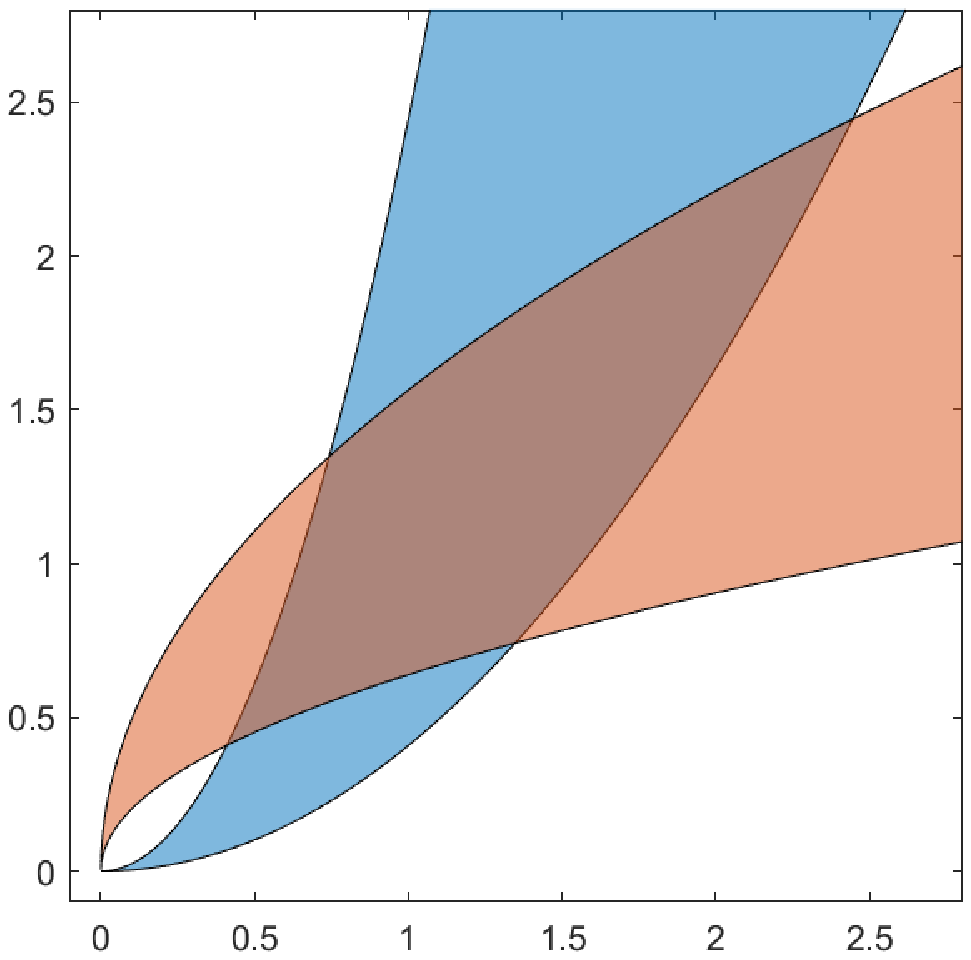}
\caption{An example of uncertainty regions corresponding to the reaction network described in Equation~(\ref{eq:variable-k}). Here we illustrate the case where the slopes of both of the reaction vectors are negative.}
\label{fig:two_curves}
\end{figure}

\begin{remark}
Without loss of generality, we will denote the uncertainty regions corresponding to the reactions $a_1 X + b_1 Y \xrightleftharpoons[k_2(t)]{k_1(t)} a'_1 X + b'_1 Y$ and $a_1 X + b_1 Y \xrightleftharpoons[k_2(t)]{k_1(t)} a'_1 X + b'_1 Y$ by the red and blue regions respectively as shown in Figure~\ref{fig:two_curves}.
\end{remark}

In what follows, we will denote the variable-$k$ dynamical system generated by Equation~(\ref{eq:variable-k}) by $\GG_{\epsilon}^{\rm{variable\mbox{-}k}}$.

\begin{definition}
Let $\vx(t)$ be a solution of $\GG_{\epsilon}^{\rm{variable\mbox{-}k}}$ with initial condition $\vx(0)\in\mathbb{R}^n_{>0}$. The \emph{omega-limit} of this solution is the set $\omega(\vx(0))=\{\z\in\mathbb{R}^n_{\geq 0}: \text{there is an increasing sequence of times} \ (t_h)_{h\in\mathbb{N}}\\ \text{such that} \displaystyle\lim_{h\to\infty}t_h=\infty$ and $\displaystyle\lim_{h\to\infty}\vx(t_h)=\z\}$. 
\end{definition}

\begin{definition}
A set $\Omega^{\rm inv}_{\rm{\rm{variable\mbox{-}k}}}\in\mathbb{R}^n_{>0}$ is said to be a \textit{closed invariant region} if it is closed and for any solution $\vx(t)$ of $\GG_{\epsilon}^{\rm{variable\mbox{-}k}}$ with $\vx(0)\in\Omega^{\rm inv}_{\rm{\rm{variable\mbox{-}k}}}$, we have $\vx(t)\in\Omega^{\rm inv}_{\rm{\rm{variable\mbox{-}k}}}$ for all $t>0$. A set $\Omega^{\min,\rm inv}_{\rm{\rm{variable\mbox{-}k}}}$ is said to be the \textit{minimal closed invariant region} if for any closed invariant region $\Omega_{\rm{\rm{variable\mbox{-}k}}}$, we have $\Omega^{\min, \rm inv}_{\rm{\rm{variable\mbox{-}k}}}\subseteq\Omega_{\rm{variable\mbox{-}k}}$. For simplicity, instead of \textit{minimal closed invariant region} we will simply say  \textit{minimal  invariant region}.
\end{definition}

\begin{definition}
A set $\Omega^{\rm glob}_{\rm{variable\mbox{-}k}}\in\mathbb{R}^n_{>0}$ is said to be a \textit{globally attracting region} if for any solution $\vx(t)$ of $\GG_{\epsilon}^{\rm{variable\mbox{-}k}}$ with $\vx(0)\in\mathbb{R}^n_{>0}$, we have $\omega(\vx(0))\subseteq \Omega^{\rm glob}_{\rm{variable\mbox{-}k}}$. A set $\Omega^{\min,\rm glob}_{\rm{variable\mbox{-}k}}$ is said to be the \textit{minimal globally attracting region} if for any globally attracting region $\Omega_{\rm{variable\mbox{-}k}}$, we have $\Omega^{\min, \rm glob}_{\rm{variable\mbox{-}k}}\subseteq\Omega^{\rm glob}_{\rm{variable\mbox{-}k}}$. 
\end{definition}

\begin{definition}
Given two points $P,Q\in\mathbb{R}^n_{>0}$, we say $P\leadsto Q$ if there is a solution $\vx(t)$ of $\GG_{\epsilon}^{\rm{variable\mbox{-}k}}$ such that $\vx(0) = P$ and for every $\xi>0$ there exists a $T$ that satisfies $||x(T)-Q||\leq\xi$.   
\end{definition}

\section{Minimal invariant region and minimal globally attracting region for variable-$k$ dynamical systems generated by two reversible reactions}\label{min_invariant_globally}

According to how we choose the parameters $a_1,a'_1,b_1,b'_1,a_2,a'_2,b_2,b'_2$, we get the following cases that correspond to various orientations of the uncertainty regions.
\begin{enumerate}[label=(\roman*)]
\item \emph{Both reaction vectors with negative slopes; one with slope less than $-1$ and the other with slope greater than $-1$}: $-1<\frac{b'_1 - b_1}{a'_1-a_1}<0$ \text{and} $\frac{b'_2 - b_2}{a'_2-a_2}< -1$.
\item \emph{Both reaction vectors with slopes between $0$ and $-1$}: $-1<\frac{b'_1 - b_1}{a'_1-a_1},\frac{b'_2 - b_2}{a'_2-a_2} <0$.
\item \emph{Both reaction vectors with slopes less than $-1$}: $\frac{b'_1 - b_1}{a'_1-a_1}<-1$ \text{and} $\frac{b'_2 - b_2}{a'_2-a_2}<-1$.

\item \emph{Both reaction vectors with positive slopes}: $\frac{b'_1 - b_1}{a'_1-a_1}>0$ \text{and} $\frac{b'_2 - b_2}{a'_2-a_2}>0$.
\item \emph{One reaction vector with positive slope and the other with slope between $-1$ and $0$}: $\frac{b'_2 - b_2}{a'_2-a_2}> 0$ \text{and} $-1<\frac{b'_1 - b_1}{a'_1-a_1}<0$.
\item \emph{One reaction vector with positive slope and the other with slope less than $-1$}: $\frac{b'_2 - b_2}{a'_2-a_2}>0$ \text{and} $\frac{b'_1 - b_1}{a'_1-a_1}<-1$.
\end{enumerate}

\begin{figure}[h!]

\begin{subfigure}[b]{.49\linewidth}
\includegraphics[width=\linewidth]{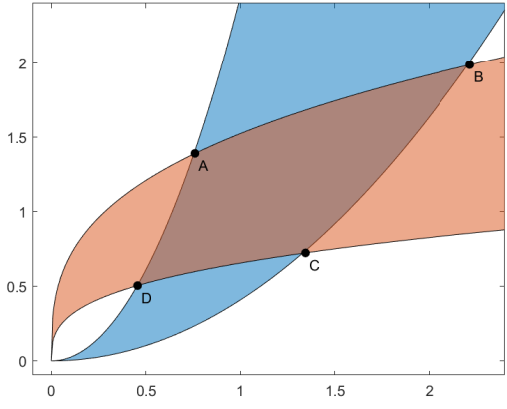}
\caption{Case (i).}\label{fig:case(i)}
\end{subfigure}
\begin{subfigure}[b]{.49\linewidth}
\includegraphics[width=\linewidth]{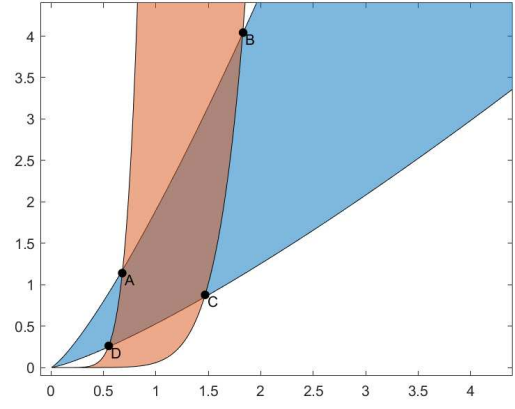}
\caption{Case (ii).}\label{fig:case(ii)}
\end{subfigure}

\begin{subfigure}[b]{.49\linewidth}
\includegraphics[width=\linewidth]{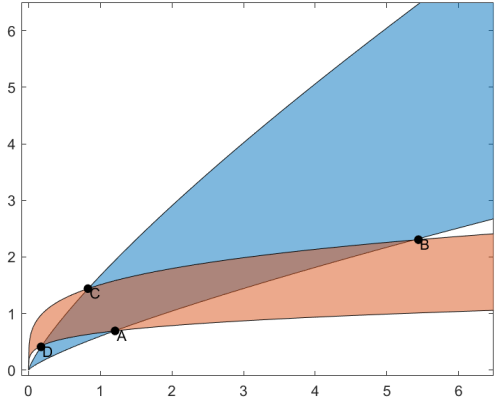}
\caption{Case (iii).}\label{fig:case(iii)}
\end{subfigure}
\begin{subfigure}[b]{.49\linewidth}
\includegraphics[width=\linewidth]{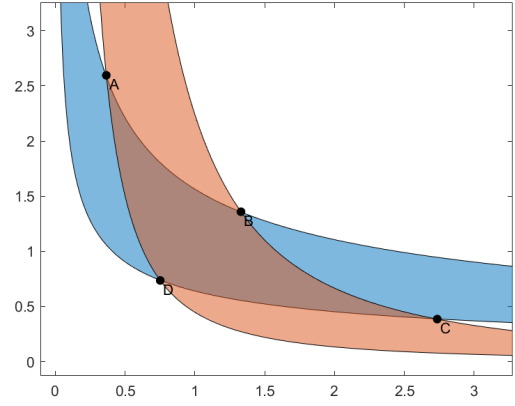}
\caption{Case (iv).}\label{fig:case(iv)}
\end{subfigure}

\caption{Uncertainty regions corresponding to two reversible reactions in case (i)-(iv).}\label{fig:four_cases}
\end{figure}

Figure~\ref{fig:four_cases} illustrates the uncertainty regions corresponding to cases (i)-(iv). Throughout this paper, the analysis for cases (i)-(iv) will be similar, while, for simplicity, in cases (v) and (vi) we will make the {\em additional assumption} that $\epsilon$ is small enough. In subsection~\ref{subsec:partial} we will discuss how to construct the minimal invariant region and minimal globally attracting region for cases (i)-(iv). For simplicity, we will only consider case (i), i.e. both reaction vectors with negative slopes; one with slope less than $-1$ and the other with slope greater than $-1$: $-1<\frac{b'_1 - b_1}{a'_1-a_1}<0$ \text{and} $\frac{b'_2 - b_2}{a'_2-a_2}< -1$; the analysis for other cases will proceed analogously. Throughout the next subsection, all references to the dynamical system will be with respect to case (i).

\subsection{Cases (i)-(iv)}\label{subsec:partial}

In the following lemma, we show that fixing the rate constants to certain values also fixes the omega-limit points of the trajectories corresponding to the variable-$k$ dynamical system generated by two reversible reactions.

\begin{figure}[h!]
\centering
\includegraphics[scale=0.6]{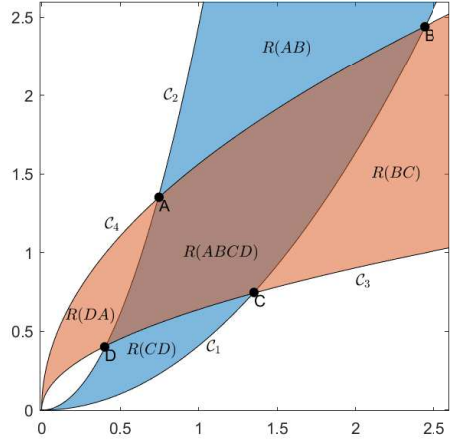}
\caption{Uncertainty regions corresponding to two reversible reactions. Regions $R_{AB},R_{BC},R_{CD},R_{DA},R_{ABCD}$ are also marked on the figure.}
\label{fig:invariant_region}
\end{figure}

\begin{lemma}\label{lem:global_attractors}
Consider the variable-$k$ dynamical system generated by Equation~(\ref{eq:variable-k}). Consider the following curves (see Fig.~\ref{fig:invariant_region} for an example):
\begin{enumerate}
\item[] $\mathcal{C}_1: x^{a'_1-a_1}y^{b'_1-b_1} = \frac{1}{\epsilon^2}$. 
\item[] $\mathcal{C}_2: x^{a'_1-a_1}y^{b'_1-b_1} = \epsilon^2$.
\item[] $\mathcal{C}_3: x^{a'_2-a_2}y^{b'_2-b_2} = \epsilon^2$. 
\item[] $\mathcal{C}_4: x^{a'_2-a_2}y^{b'_2-b_2} = \frac{1}{\epsilon^2}$.
\end{enumerate}
Let $A$ be the intersection point of the curves $\mathcal{C}_2$ and $\mathcal{C}_4$, $B$ be the intersection point of the curves $\mathcal{C}_1$ and $\mathcal{C}_4$, $C$ be the intersection point of the curves $\mathcal{C}_1$ and $\mathcal{C}_3$ and $D$ be the intersection point of the curves $\mathcal{C}_2$ and $\mathcal{C}_3$. Let $\vx(t)$ be a trajectory of~(\ref{eq:variable-k}) and let $\vx(0)\in\mathbb{R}^2_{>0}$. Then we have the following:
\begin{enumerate}[label=(\roman*)]
\item If $k_1(t)= \epsilon,k_2(t)=\frac{1}{\epsilon},k_3(t)=\frac{1}{\epsilon},k_4(t)=\epsilon$, then $\displaystyle\lim_{t\to\infty}\vx(t) = A$.
\item If $k_1(t)=\frac{1}{\epsilon},k_2(t)= \epsilon,k_3(t)=\frac{1}{\epsilon},k_4(t)=\epsilon$, then $\displaystyle\lim_{t\to\infty}\vx(t) = B$.
\item If $k_1(t)= \frac{1}{\epsilon},k_2(t)= \epsilon,k_3(t)=\epsilon,k_4(t)=\frac{1}{\epsilon}$, then $\displaystyle\lim_{t\to\infty}\vx(t) = C$.
\item If $k_1(t)= \epsilon,k_2(t)= \frac{1}{\epsilon},k_3(t)=\epsilon,k_4(t)=\frac{1}{\epsilon}$, then $\displaystyle\lim_{t\to\infty}\vx(t) = D$.
\end{enumerate}
\end{lemma}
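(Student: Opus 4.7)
The plan is to recognize that each of the four choices of rate constants turns the non-autonomous system~(\ref{eq:variable-k}) into an \emph{autonomous} detailed balanced mass-action system whose unique positive equilibrium is the claimed intersection point $A$, $B$, $C$, or $D$, and then to invoke the classical global convergence theorem for detailed balanced systems.

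First I will verify that the curves $\mathcal{C}_1,\mathcal{C}_2,\mathcal{C}_3,\mathcal{C}_4$ are exactly the detailed balance loci for the two reversible reactions under the various rate constant choices. For instance, in case (i) the condition $k_1 \vx^{(a_1,b_1)} = k_2 \vx^{(a_1',b_1')}$ with $k_1=\epsilon,\,k_2=1/\epsilon$ rearranges to $x^{a_1'-a_1}y^{b_1'-b_1}=\epsilon^2$, i.e.\ the curve $\mathcal{C}_2$; similarly the second reversible pair with $k_3=1/\epsilon,\,k_4=\epsilon$ gives $\mathcal{C}_4$. Taking logarithms converts each defining equation into a line in $(\log x,\log y)$-coordinates, whose slopes are precisely the two reaction-vector slopes. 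The slope hypothesis in case~(i) makes these lines non-parallel, so $A=\mathcal{C}_2\cap\mathcal{C}_4$ exists and is unique in $\R^2_{>0}$. The same computation identifies $B,C,D$ uniquely in parts~(ii)--(iv).

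Next I will invoke the Horn--Jackson--Feinberg theorem: any detailed balanced autonomous mass-action system admits the pseudo-Helmholtz free energy
\begin{equation*}
V(\vx) \;=\; \sum_{i}\Bigl(x_i\log\frac{x_i}{A_i}-x_i+A_i\Bigr),
\end{equation*}
centered at the equilibrium $A=(A_1,A_2)$, as a strict Lyapunov function, with $\dot V\le 0$ and equality precisely on the detailed balance set. Because the two reaction vectors in case~(i) are linearly independent (distinct slopes), the stoichiometric subspace equals $\R^2$, so the stoichiometric compatibility class through any $\vx(0)\in\R^2_{>0}$ is the whole of $\R^2_{>0}$, and $A$ is the unique positive critical point of $V$ there. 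LaSalle's invariance principle then forces $\omega(\vx(0))=\{A\}$, which is the claim. Parts (ii), (iii), (iv) proceed identically with the appropriate centering of $V$ at $B$, $C$, or $D$.

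The main obstacle I anticipate is the boundary behavior. Since the stoichiometric compatibility class coincides with all of $\R^2_{>0}$, it is unbounded and has $\partial\R^2_{\ge 0}$ in its closure, so one must check that trajectories neither escape to infinity nor accumulate on the coordinate axes. This is handled by showing that $V$ has compact sublevel sets in $\R^2_{>0}$: the $x_i\log x_i$ growth at infinity and the $-x_i\log A_i$ terms (combined with the constant shift) force $V(\vx)\to\infty$ both as $\|\vx\|\to\infty$ and as $\vx$ approaches any coordinate axis. Once this bookkeeping is in place, the strict monotonicity of $V$ along non-equilibrium orbits confines each trajectory to a compact subset of $\R^2_{>0}$ and yields the stated convergence.
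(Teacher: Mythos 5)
Your overall strategy is the same as the paper's: fix the rate constants so that the system becomes an autonomous detailed balanced mass-action system whose unique positive equilibrium is the relevant intersection point, and then conclude global convergence. Your identification of the curves $\mathcal{C}_1,\dots,\mathcal{C}_4$ as the detailed balance loci and of $A,B,C,D$ as the corresponding equilibria is correct and matches the paper.

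However, there is a genuine gap in the way you dispose of the boundary behavior, and it is precisely the step that carries all the difficulty. You claim that the pseudo-Helmholtz function $V(\vx)=\sum_i\bigl(x_i\log\frac{x_i}{A_i}-x_i+A_i\bigr)$ tends to $+\infty$ as $\vx$ approaches a coordinate axis, so that its sublevel sets are compact in $\R^2_{>0}$. This is false: since $x\log x\to 0$ as $x\to 0^{+}$, each summand tends to the \emph{finite} value $A_i$ as $x_i\to 0$, so $V$ extends continuously to the closed orthant and its sublevel sets, while compact in $\R^2_{\geq 0}$, may well meet the boundary. Consequently LaSalle only gives $\omega(\vx(0))\subseteq\{A\}\cup\partial\R^2_{\geq 0}$, and ruling out boundary omega-limit points is exactly the content of the Global Attractor Conjecture, which is open in general and certainly not a consequence of properties of $V$ alone. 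The paper closes this gap by invoking the two-dimensional persistence/permanence theorem of Craciun, Nazarov and Pantea \cite{craciun2013persistence}, which guarantees that every trajectory of this two-species system eventually enters a compact subset of the open orthant; only after that does the Lyapunov argument yield $\lim_{t\to\infty}\vx(t)=A$. To repair your proof you must replace the incorrect coercivity claim with an appeal to such a persistence result (or an ad hoc argument excluding boundary accumulation for this particular network).
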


\begin{proof}
Note that the point $A$ is the intersection of the curves $x^{a'_1-a_1}y^{b'_1-b_1} = \frac{1}{\epsilon^2}$ and $x^{a'_2-a_2}y^{b'_2-b_2} = \frac{1}{\epsilon^2}$. If $k_1(t)=\frac{1}{\epsilon},k_2(t)=\epsilon,k_3(t)=\frac{1}{\epsilon},k_4(t)= \epsilon$, then the point $A$ becomes detailed balanced since $k_1(t)x^{a_1}y^{b_1}=k_2(t)x^{a'_1}y^{b'_1}$ and $k_3(t)x^{a_2}y^{b_2}=k_4(t)x^{a'_2}y^{b'_2}$. Since the dynamical system is two-dimensional, it follows from~\cite{craciun2013persistence} that $\displaystyle\lim_{t\to\infty}\vx(t)= A$. 
\end{proof}

\textbf{Constructing} $\mathcal{M}_{\epsilon}$: Let us denote the intersection points of the curves $\mathcal{C}_1,\mathcal{C}_2,\mathcal{C}_3,\mathcal{C}_4$ by $A,B,C,D$ as in Figure~\ref{fig:invariant_region}. Note that two out of these four intersection points will have the cone (formed by the attracting directions at those point) that contains the region $R_{ABCD}$. Let us denote these points by $A$ and $C$. For the other two points, the cone (formed by the attracting directions at those points) is contained in the region $R_{ABCD}$. Let us denote these points by $B$ and $D$. Starting from points $A$ and $C$, choose rate constants so that the neighbouring intersection points are global attractors for these trajectories. The region enclosed by these four trajectories is $\mathcal{M}_{\epsilon}$.

\begin{proposition} \label{lem:contained_uncertainty}
Consider the dynamical system depicted in Figure~\ref{fig:invariant_region}. Then $\mathcal{M}_{\epsilon}\subset R_{AB}\cup R_{BC}\cup R_{CD}\cup R_{DA}\cup R_{ABCD}$.
\end{proposition}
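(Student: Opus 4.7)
By construction (see the paragraph preceding the statement), $\mathcal{M}_\epsilon$ is bounded by four trajectory arcs arising from Lemma~\ref{lem:global_attractors}: $\gamma_{AB}$ and $\gamma_{CB}$ flowing to $B$ under case-(ii) rate constants ($k_1=1/\epsilon$, $k_2=\epsilon$, $k_3=1/\epsilon$, $k_4=\epsilon$), together with $\gamma_{AD}$ and $\gamma_{CD}$ flowing to $D$ under case-(iv). The plan is to reduce to a single arc, say $\gamma_{AB}$, using the manifest symmetry between cases (ii)/(iv) and between the starting points $A$/$C$, and then to show that $\gamma_{AB}$ lies in the union $R_{AB}\cup R_{BC}\cup R_{CD}\cup R_{DA}\cup R_{ABCD}$, which coincides with the union of the red and blue uncertainty strips (call this the ``cross''). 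Once all four boundary arcs lie in the cross, a topological observation---that the four connected components of the complement of the cross near $R_{ABCD}$ are each adjacent to exactly one of the points $A,B,C,D$---forces the enclosed region $\mathcal{M}_\epsilon$ into the cross as well.

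Introduce $f_i(\vx)=x^{a'_i-a_i}y^{b'_i-b_i}$, reaction vectors $\vec v_i=(a'_i-a_i,b'_i-b_i)$, and net rates $R_i=k_{2i-1}\vx^{(a_i,b_i)}-k_{2i}\vx^{(a'_i,b'_i)}$. A straightforward differentiation yields
\[
\frac{\dot f_1}{f_1}=\alpha R_1+\kappa R_2,\qquad \frac{\dot f_2}{f_2}=\beta R_2+\kappa R_1,
\]
with $\alpha,\beta>0$ and $\kappa=(a'_1-a_1)(a'_2-a_2)/x+(b'_1-b_1)(b'_2-b_2)/y$, which is positive throughout the first quadrant in case (i) (after fixing reaction orientations consistently). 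Under case-(ii) rate constants one has $R_i=c_i(\vx)(1/\epsilon^2-f_i)$ with $c_i>0$, and these formulas immediately imply the forward-invariance of $S:=\{f_1\le 1/\epsilon^2,\ f_2\ge 1/\epsilon^2\}$: on $\{f_1=1/\epsilon^2\}\cap S$, $\dot f_1/f_1=\kappa c_2(1/\epsilon^2-f_2)\le 0$, and on $\{f_2=1/\epsilon^2\}\cap S$, $\dot f_2/f_2=\kappa c_1(1/\epsilon^2-f_1)\ge 0$. Since $A\in\partial S$, $\gamma_{AB}\subset S$.

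The main (and hardest) step is to further confine $\gamma_{AB}$ to $\overline{R_{AB}\cup R_{ABCD}}$, i.e., to exclude entry into the outer-corner region $\{f_1<\epsilon^2,\ f_2>1/\epsilon^2\}\subset S$. At $A$, $R_2(A)=0$ and $\dot\vx(0)=R_1(A)\vec v_1$ with $\nabla f_1\cdot\vec v_1>0$, so $\gamma_{AB}$ immediately enters $\{f_1>\epsilon^2\}$. If $\gamma_{AB}$ were to return to $\mathcal{C}_2=\{f_1=\epsilon^2\}$ at a first time $t^*>0$, then $\dot f_1(t^*)\le 0$ would force $f_2(t^*)$ to exceed $1/\epsilon^2$ by a quantifiable amount (computed from the explicit expressions for $R_1,R_2$ together with $\kappa>0$). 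Ruling out this scenario combines the convergence $\gamma_{AB}(t)\to B$ guaranteed by Lemma~\ref{lem:global_attractors}, at which $f_2(B)=1/\epsilon^2$, with the detailed-balance Lyapunov function at $B$, which decreases strictly along $\gamma_{AB}$ and thereby bounds the excursion of $f_2$ above $1/\epsilon^2$. With $\gamma_{AB}$ thus confined to the cross, symmetry yields the same inclusion for $\gamma_{CB},\gamma_{AD},\gamma_{CD}$, and the topological argument of the first paragraph completes the proof.
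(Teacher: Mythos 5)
Your identity $\dot f_1/f_1=\alpha R_1+\kappa R_2$, $\dot f_2/f_2=\kappa R_1+\beta R_2$ with $\alpha,\beta>0$ and $\kappa>0$ in case (i) is correct, and it is a clean analytic restatement of the two ingredients the paper actually uses: the confinement of the velocity to the cone spanned by $\vv_1,\vv_2$, and the fact that on a detailed-balance curve of one reaction the velocity reduces to a scalar multiple of the other reaction vector. The forward-invariance of $S=\{f_1\le 1/\epsilon^2,\ f_2\ge 1/\epsilon^2\}$ also checks out. The problem is exactly the step you flag as the hardest: excluding the outer corner $\{f_1<\epsilon^2,\ f_2>1/\epsilon^2\}$. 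On the wall $\{f_1=\epsilon^2\}$ one has $\dot f_1/f_1=\alpha c_1(1/\epsilon^2-\epsilon^2)+\kappa c_2(1/\epsilon^2-f_2)$, a difference of two positive quantities with position-dependent coefficients, and you need the first to dominate wherever the trajectory could meet that wall. The appeal to the detailed-balance Lyapunov function at $B$ does not deliver this: that function only confines $\gamma_{AB}$ to its sublevel set through $A$, which is a large neighbourhood of $B$ that is not contained in the union of the two uncertainty strips, and it yields no bound of the form $f_2-1/\epsilon^2\le \frac{\alpha c_1}{\kappa c_2}(1/\epsilon^2-\epsilon^2)$, which is what the sign condition requires. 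As written this step is a gap, not a proof.

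The structural reason the gap appears is worth naming. The paper pairs each boundary arc with a region whose critical walls lie on the detailed-balance curves \emph{of the rate constants driving that arc}: for the arc joining $D$ and $A$ (constants $k_1=\epsilon,k_2=1/\epsilon,k_3=1/\epsilon,k_4=\epsilon$) the relevant walls sit on $\mathcal{C}_2$ and $\mathcal{C}_4$, where $R_1$ and $R_2$ respectively vanish identically, so the velocity there is a single term $R_i\vv_i$ whose orientation relative to the region can be read off with no comparison of magnitudes; the one remaining wall is handled by the cone argument. In your setup the dangerous wall $\{f_1=\epsilon^2\}=\mathcal{C}_2$ is a detailed-balance curve for $k_1=\epsilon,k_2=1/\epsilon$ but \emph{not} for the case-(ii) constants $k_1=1/\epsilon,k_2=\epsilon$ that drive $\gamma_{AB}$, which is precisely why neither term vanishes there and the sign is indeterminate. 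To close the argument you should either re-pair arcs with regions as the paper does, so that every wall you must not cross is either a detailed-balance curve of the driving constants or is excluded by the cone/$\kappa>0$ argument, or else supply a genuine a priori bound on $\sup f_2$ along $\gamma_{AB}$; the Lyapunov sublevel set alone will not produce one.
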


\begin{proof}
Consider a trajectory $\vx(t)$ of the reaction network given by~(\ref{eq:variable-k}) with $\vx(0)=D$. The dynamical system it generates is given by
\[\begin{pmatrix}\label{eq:dynamical_system}
\dot{x}\\
\dot{y} 
\end{pmatrix} = \left(k_1(t)x^{a_1}y^{b_1} - k_2(t)x^{a'_1}y^{b'_1}\right)\begin{pmatrix}
a'_1 - a_1\\
b'_1 - b_1
\end{pmatrix}
+ 
\left(k_3(t)x^{a_2}y^{b_2} - k_4(t)x^{a'_2}y^{b'_2}\right)\begin{pmatrix}
a'_2 - a_2\\
b'_2 - b_2
\end{pmatrix}\]

If we choose rate constants $k_1(t)= \epsilon,k_2(t)= \frac{1}{\epsilon},k_3(t)= \frac{1}{\epsilon} ,k_4(t)= \epsilon$, then by Lemma~\ref{lem:global_attractors} we get $\displaystyle\lim_{t\to\infty}\vx(t) = A$. We now show that this trajectory stays inside the region $R_{DA}$. Since we are in case (i), we have $-1$: $-1<\frac{b'_1 - b_1}{a'_1-a_1}<0$ \text{and} $\frac{b'_2 - b_2}{a'_2-a_2}< -1$. Therefore, within the region $R_{DA}$, the trajectory is confined to a cone formed by $\vv_1= \begin{pmatrix} a'_1 - a_1\\
b'_1 - b_1
\end{pmatrix}$ and $\vv_2=\begin{pmatrix} a'_2 - a_2\\
b'_2 - b_2
\end{pmatrix}$ as shown in Figure~\ref{fig:cone}. Therefore, this trajectory cannot cross the curve $OD$. We show that it also cannot cross the curves $OA$ and $DA$. For contradiction, assume that the trajectory intersects $OA$ at point $P$. Note that since the point $P$ lies on the curve $\mathcal{C}_4$, we have $k_3(t)x^{a_2}y^{b_2} = k_4(t)x^{a'_2}y^{b'_2}$. Therefore, the only contribution to the vector field at point $P$ is due to the attracting direction of the blue uncertainty region (shown as $\vv_1$ in Figure~\ref{fig:cone}) which points towards the region $OAD$. A similar argument shows that the trajectory cannot cross the curve $DA$. Repeating this for other parts of the boundary of $\mathcal{M}_{\epsilon}$, we get that $\mathcal{M}_{\epsilon}\in R_{AB}\cup R_{BC}\cup R_{CD}\cup R_{DA}$, as required.

\end{proof}


\begin{figure}[h!]
\centering
\includegraphics[scale=0.6]{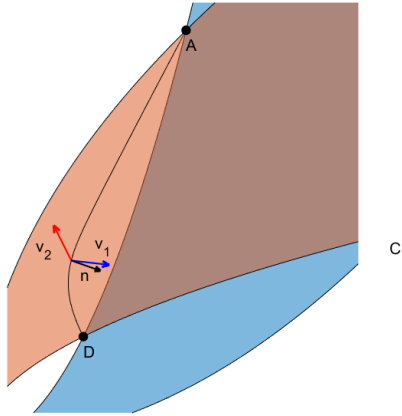}
\caption{The direction of the trajectory from $D$ to $A$ is confined to the cone formed by vectors $v_1$ and $v_2$. In particular, this means that this trajectory lies in the region $R_{DA}$ (shown in light orange color).}
\label{fig:cone}
\end{figure}

\begin{proposition}\label{prop:invariant}
$\mathcal{M}_{\epsilon}$ is an invariant region for the dynamical system generated by the reaction network in Equation~(\ref{eq:variable-k}).
\end{proposition}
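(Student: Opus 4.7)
The plan is to establish invariance via a Nagumo-style tangent-cone argument applied to a differential inclusion. The variable-$k$ system is equivalent to $\dot{\vx}\in F(\vx)$, where
\[
F(P)=\Bigl\{(k_1 x^{a_1}y^{b_1}-k_2 x^{a_1'}y^{b_1'})\vv_1+(k_3 x^{a_2}y^{b_2}-k_4 x^{a_2'}y^{b_2'})\vv_2 : k_i\in[\epsilon,1/\epsilon]\Bigr\}
\]
is a compact convex subset of $\mathbb{R}^2$, and $\vv_j=(a_j'-a_j,b_j'-b_j)$ are the two reaction vectors. The boundary $\partial\mathcal{M}_\epsilon$ consists of four smooth trajectory arcs $\gamma_{AD},\gamma_{DC},\gamma_{CB},\gamma_{BA}$, each being a solution of an autonomous system from Lemma~\ref{lem:global_attractors}. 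To prove invariance it then suffices to verify that at every $P\in\partial\mathcal{M}_\epsilon$, every vector in $F(P)$ lies in the inward tangent cone to $\mathcal{M}_\epsilon$ at $P$.

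Next I would exploit the product structure of $F(P)$. As $(k_1,k_2)$ and $(k_3,k_4)$ vary independently in $[\epsilon,1/\epsilon]^2$, the coefficients of $\vv_1$ and of $\vv_2$ range independently over intervals $[\alpha_-(P),\alpha_+(P)]$ and $[\beta_-(P),\beta_+(P)]$. Hence $F(P)$ is a parallelogram with sides parallel to $\vv_1$ and $\vv_2$, whose four corners are \emph{exactly} the four extreme rate-constant choices of Lemma~\ref{lem:global_attractors} — that is, the extreme vector fields whose (autonomous) flows converge to $A$, $B$, $C$, and $D$. By convexity of the tangent cone, it then suffices to check, at each $P\in\partial\mathcal{M}_\epsilon$, that each of these four extreme vectors points into $\mathcal{M}_\epsilon$ or is tangent to the relevant arc.

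Fix for concreteness the arc $\gamma_{DA}$, which is by construction the trajectory of the ``toward-$A$'' extreme; that extreme is therefore tangent. Traversing the parallelogram from the ``toward-$A$'' corner to the adjacent ``toward-$B$'' corner corresponds to a nonnegative multiple of $\vv_1$, and traversing to the adjacent ``toward-$D$'' corner corresponds to a nonpositive multiple of $\vv_2$; the ``toward-$C$'' corner is the opposite vertex and hence a convex combination of the other two. Thus the verification reduces to showing that $\vv_1$ and $-\vv_2$ both point into $\mathcal{M}_\epsilon$ along $\gamma_{DA}$, and the analogous statement with the appropriate signed pair for each of the other three arcs. This is a purely geometric check that I would carry out via the same cone argument used in Proposition~\ref{lem:contained_uncertainty}: inside the strip $R_{DA}$, the arc $\gamma_{DA}$ is confined to the cone generated by $\vv_1$ and $\vv_2$ emanating from $D$, so the interior of $\mathcal{M}_\epsilon$ lies on the side of $\gamma_{DA}$ indicated by $\vv_1$ and opposite to $\vv_2$, exactly as required.

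The main obstacle I expect is the bookkeeping rather than the ideas: one must identify, for each of the four arcs, which two extremes are ``adjacent'' in the parallelogram $F(P)$ and confirm that the corresponding $\pm\vv_i$ points into $\mathcal{M}_\epsilon$ and not out. This becomes genuinely case-dependent across cases (i)--(iv), because the signs of $a_j'-a_j$ and $b_j'-b_j$, hence the orientation of the parallelogram relative to the arcs, change from case to case; the parallelogram reduction itself, however, is completely general and does the heavy lifting.
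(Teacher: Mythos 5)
Your proposal is correct and is essentially the paper's own argument: the paper likewise reduces invariance to a Nagumo-type boundary check, writes an arbitrary admissible vector field on the arc $\gamma_{DA}$ as the tangent extreme field plus a nonnegative multiple of $\vv_1$ minus a nonnegative multiple of $\vv_2$, and concludes from $\vv_1\cdot\textbf{n}>0$ and $\vv_2\cdot\textbf{n}<0$ (the cone confinement of Proposition~\ref{lem:contained_uncertainty}). Your parallelogram-of-extreme-points framing is just a clean repackaging of the paper's two inequalities, so the two proofs coincide in substance.
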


\begin{proof} 

To show that $\mathcal{M}_{\epsilon}$ is an invariant region, it suffices to show that on the boundary of $\mathcal{M}_{\epsilon}$, the vector field points towards the interior of $\mathcal{M}_{\epsilon}$~\cite{nagumo1942lage,blanchini1999set}. Towards this, consider the dynamical system generated by Equation~(\ref{eq:variable-k})

\begin{eqnarray}\label{eq:invariant_region}
\begin{pmatrix}
\dot{x}\\
\dot{y} 
\end{pmatrix} = \left(k_1(t)x^{a_1}y^{b_1} - k_2(t)x^{a'_1}y^{b'_1}\right)\begin{pmatrix}
a'_1 - a_1\\
b'_1 - b_1
\end{pmatrix}
+ 
\left(k_3(t)x^{a_2}y^{b_2} - k_4(t)x^{a'_2}y^{b'_2}\right)\begin{pmatrix}
a'_2 - a_2\\
b'_2 - b_2
\end{pmatrix}
\end{eqnarray}
Let
\begin{eqnarray}
\vv_1=\begin{pmatrix} a'_1 - a_1 \\  b'_1 - b_1 \end{pmatrix}\, \rm{and}\,\, \vv_2=\begin{pmatrix} a'_2 - a_2 \\ b'_2 - b_2\end{pmatrix}. \end{eqnarray}
Then Equation~(\ref{eq:invariant_region}) can be written as

\begin{eqnarray}
\begin{pmatrix}
\dot{x}\\
\dot{y} 
\end{pmatrix} = \left(k_1(t)x^{a_1}y^{b_1} - k_2(t)x^{a'_1}y^{b'_1}\right)\vv_1
+ 
\left(k_3(t)x^{a_2}y^{b_2} - k_4(t)x^{a'_2}y^{b'_2}\right)\vv_2
\end{eqnarray}

We will show that on the boundary of $\mathcal{M}_{\epsilon}$ consisting of the trajectory from $D$ to $A$, the vector field points towards the interior of $\mathcal{M}_{\epsilon}$. The proof for other parts of the boundary of $\mathcal{M}_{\epsilon}$ will follow analogously. From Lemma~\ref{lem:global_attractors}, the trajectory from $D$ to $A$ is given by the following system of ODEs.

 \begin{eqnarray}\label{eq:extreme_trajectory}
    \begin{pmatrix}
          \dot{x} \\
          \dot{y} 
         \end{pmatrix}_{D\rightarrow A} =
          \bigg(\epsilon x^{a_1}y^{b_1}  - \frac{1}{\epsilon} x^{a'_1}y^{b'_1} \bigg)
         \vv_1     
          + \bigg(\frac{1}{\epsilon} x^{a_2}y^{b_2} - \epsilon x^{a'_2}y^{b'_2} \bigg)
         \vv_2
\end{eqnarray}
(where we have used the fact that $k_1(t) = \epsilon, k_2(t) = \frac{1}{\epsilon}, k_3(t) = \frac{1}{\epsilon}, k_4(t) = \epsilon$). Let $\textbf{n}$ denote the inward pointing normal to the trajectory given by  Equation~(\ref{eq:extreme_trajectory}). We will show that 
\begin{eqnarray}
\begin{pmatrix}
\dot{x} \\
\dot{y} 
\end{pmatrix} \cdot\textbf{n} \geq 0
\end{eqnarray}
Note that
\begin{eqnarray}\label{eq:normal}
\begin{pmatrix}
          \dot{x} \\
          \dot{y} 
         \end{pmatrix}_{D\rightarrow A}\cdot\textbf{n} = \bigg[\bigg( \epsilon x^{a_1}y^{b_1} - \frac{1}{\epsilon} x^{a'_1}y^{b'_1} \bigg)
         \vv_1     
          + \bigg(\frac{1}{\epsilon} x^{a_2}y^{b_2} - \epsilon x^{a'_2}y^{b'_2} \bigg)
         \vv_2\bigg]\cdot\textbf{n} = 0
\end{eqnarray}

Since within region $OAD$, the trajectory of the dynamical system is confined to the cone formed by vectors $\vv_1$ and $\vv_2$ (as shown in Figure~\ref{fig:cone}), we get $ \vv_1 \cdot\textbf{n}  > 0\, \rm{and}\, \vv_2 \cdot\textbf{n} < 0$. Noting that $\epsilon \leq k_1(t),k_2(t),k_3(t),k_4(t)\leq\frac{1}{\epsilon}$, we have
\begin{eqnarray}\label{eq:1}
\bigg(k_1(t)x^{a'_1}y^{b'_1} - k_2(t)x^{a_1}y^{b_1}   \bigg)
         \vv_1\cdot\textbf{n} \geq 
\bigg(\epsilon x^{a_1}y^{b_1} - \frac{1}{\epsilon}x^{a'_1}y^{b'_1}  \bigg)
         \vv_1\cdot\textbf{n}
\end{eqnarray}

and 

\begin{eqnarray}\label{eq:2}
\bigg(k_3(t)x^{a_2}y^{b_2} - k_4(t) x^{a'_2}y^{b'_2}   \bigg)
         \vv_2\cdot\textbf{n} \geq 
\bigg(\frac{1}{\epsilon}x^{a_2}y^{b_2} - \epsilon x^{a'_2}y^{b'_2}  \bigg)
         \vv_2\cdot\textbf{n} 
\end{eqnarray}
Adding Equations~(\ref{eq:1}) and~(\ref{eq:2}) and using Equation~(\ref{eq:normal}), we get that 

\begin{eqnarray}\label{eq:3}
\bigg[\bigg(k_1(t)x^{a_1}y^{b_1} - k_2(t)x^{a'_1}y^{b'_1}   \bigg)
         \vv_1 +
         \bigg(k_3(t)x^{a_2}y^{b_2} - k_4(t) x^{a'_2}y^{b'_2}   \bigg)
         \vv_2\bigg]
         \cdot\textbf{n}\geq 0
\end{eqnarray}
as required. 
\end{proof}

\begin{remark}
Note that Proposition~\ref{prop:invariant} shows that $\mathcal{M}_{\epsilon_0}$ is an invariant region for $\epsilon_0>0$. For all $\epsilon < \epsilon_0$, the inequalities given by Equations~(\ref{eq:1}),~(\ref{eq:2}) and~(\ref{eq:3}) become strict and hence the net vector field along the boundary of $\mathcal{M}_{\epsilon}$ points strictly onto its interior. 
\end{remark}

\begin{remark}\label{rem:continuous_dependence}
Consider points $P_1,P_2,P_3\in\mathbb{R}^2_{>0}$. If we have $P_1\leadsto P_2$ and $P_2\leadsto P_3$, then we have $P_1\leadsto P_3$ since the solutions of this dynamical system depend continuously on their initial conditions.
\end{remark}

\begin{proposition}\label{prop:path_invariant}
Consider the dynamical system generated by Equation~(\ref{eq:variable-k}). If $P_2\in\mathcal{M}_{\epsilon}$, then $P_1\leadsto P_2$ for any $P_1\in\mathbb{R}^2_{>0}$.
\end{proposition}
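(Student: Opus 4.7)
The plan has three phases. First, I would reduce the claim to showing $A \leadsto P_2$. By Lemma~\ref{lem:global_attractors}(i), fixing the constant rate constants $k_1 \equiv \epsilon,\ k_2 \equiv 1/\epsilon,\ k_3 \equiv 1/\epsilon,\ k_4 \equiv \epsilon$ sends the trajectory starting at any $P_1 \in \mathbb{R}^2_{>0}$ to $A$, so $P_1 \leadsto A$. Remark~\ref{rem:continuous_dependence} then reduces the proposition to establishing $A \leadsto P_2$ for every $P_2 \in \mathcal{M}_\epsilon$.

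Second, I would build up reachability along the boundary of $\mathcal{M}_\epsilon$. By construction, the boundary is the union of four trajectories: the arcs $A \to B$ and $A \to D$ emanating from $A$, and $C \to B$ and $C \to D$ emanating from $C$, each realised by one of the four constant rate schedules of Lemma~\ref{lem:global_attractors}. Hence $A \leadsto B$ and $A \leadsto D$ directly, and every point on these two arcs is reached as a finite-time value on the corresponding trajectory. Chaining with Remark~\ref{rem:continuous_dependence} via the schedules of Lemma~\ref{lem:global_attractors}(iii) that make $C$ a global attractor from $B$ or $D$ gives $A \leadsto C$ and $A \leadsto P_2$ for every $P_2$ on the $C \to B$ or $C \to D$ arc.

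Third, for $P_2$ in the interior of $\mathcal{M}_\epsilon$, I would exploit the uncertainty-region structure. In case (i) the reaction vectors $\vv_1$ and $\vv_2$ are linearly independent and span $\mathbb{R}^2$. Inside the central region $R_{ABCD}$ both reactions lie in their uncertainty regions, so each of $k_1(t) x^{a_1} y^{b_1} - k_2(t) x^{a'_1} y^{b'_1}$ and $k_3(t) x^{a_2} y^{b_2} - k_4(t) x^{a'_2} y^{b'_2}$ can be tuned to take any sign at the given point by choosing $k_i \in [\epsilon, 1/\epsilon]$. Hence the vector field at each point of $R_{ABCD}$ can be steered in an arbitrary direction, letting a piecewise-constant schedule drive the trajectory along any smooth curve through $R_{ABCD}$; composed with the $A \to C$ trajectory of Lemma~\ref{lem:global_attractors}(iii), which enters $R_{ABCD}$, this yields $A \leadsto P_2$ for every $P_2 \in R_{ABCD}$. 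For $P_2$ in one of the side regions $R_{AB}, R_{BC}, R_{CD}, R_{DA}$, I would instead use the two-parameter family obtained by following a boundary arc from $A$ for a variable time $t_1$ and then switching to the schedule of Lemma~\ref{lem:global_attractors} that makes the opposite corner the attractor; as $t_1$ varies these trajectories foliate the side region and must pass arbitrarily close to $P_2$.

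The main obstacle is making the last paragraph rigorous: verifying that the two-parameter family of ``follow-then-switch'' trajectories genuinely covers each side region, and that the free-steering argument inside $R_{ABCD}$ can be implemented by an admissible solution staying inside $\mathcal{M}_\epsilon$. Both require a careful transversality and continuous-dependence argument, together with Proposition~\ref{prop:invariant} to ensure that once the trajectory enters $\mathcal{M}_\epsilon$ it cannot escape, so that the image of a continuous two-parameter parametrisation is an open subset of $\mathcal{M}_\epsilon$ with boundary contained in $\partial \mathcal{M}_\epsilon$, which combined with connectedness forces it to cover the intended region.
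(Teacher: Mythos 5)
Your Phase 1 reduction and the boundary reachability in Phase 2 are sound and consistent with the paper's strategy (the paper likewise chains $P_1\leadsto D$, then rides boundary trajectories, using Remark~\ref{rem:continuous_dependence}). The genuine gap is in your treatment of the side regions $R_{AB},R_{BC},R_{CD},R_{DA}$. You assert that the two-parameter ``follow a boundary arc for time $t_1$, then switch to the schedule attracting to the opposite corner'' family foliates the side region, but this is exactly what needs proof and it is not clear it is even true: after the switch the trajectories belong to a single \emph{autonomous} system whose only equilibrium is the opposite corner, and the backward orbit of that system through a given $P_2$ in the side region need not meet the arc you followed in the first leg --- it can exit $\mathcal{M}_{\epsilon}$ through a different piece of $\partial\mathcal{M}_{\epsilon}$ (the boundary trajectories are orbits of \emph{different} autonomous systems, so nothing prevents crossing them backward). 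Your fallback topological argument also does not close the gap, because the boundary of the image of the two-parameter map is not contained in $\partial\mathcal{M}_{\epsilon}$: it also contains the limiting second-leg trajectory and the accumulation set at the attracting corner. The paper avoids all of this with a concrete path: since the side region (e.g.\ $OAD$) lies \emph{inside} the red uncertainty region but \emph{outside} the blue one, one can choose $k_3(t),k_4(t)$ so that $k_3(t)x^{a_2}y^{b_2}=k_4(t)x^{a'_2}y^{b'_2}$ pointwise, which kills the red contribution and forces motion along the fixed direction $\vv_1$; one then enters this straight line at the explicit point $Q$ where the $\vv_1$-line through $P_2$ meets the boundary trajectory $D\to A$, and rides it to $P_2$. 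You would need either to adopt this device or to supply a genuine covering argument for your switched family.

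For $P_2\in R_{ABCD}$ your controllability argument (both bracket terms change sign inside their uncertainty regions, $\vv_1,\vv_2$ span $\mathbb{R}^2$, hence arbitrary steering) is a legitimately different route from the paper's, and can be made rigorous for points in the \emph{interior} of $R_{ABCD}$ with a reparametrization/chattering argument; but note it degenerates on $\partial R_{ABCD}$, where one of the two coefficients can only take one sign, so those points still need the side-region treatment or a limiting step. The paper's alternative is both shorter and uniform over all of $R_{ABCD}$: every such $P_2$ is the detailed-balanced equilibrium of the \emph{constant}-rate system with $k_2x^{a_1}y^{b_1}=k_1x^{a'_1}y^{b'_1}$ and $k_4x^{a_2}y^{b_2}=k_3x^{a'_2}y^{b'_2}$ at $P_2$ for admissible $k_1,\dots,k_4$, and the two-dimensional global convergence result of Craciun--Nazarov--Pantea then gives $\lim_{t\to\infty}\vx(t)=P_2$ from any initial condition, hence $P_1\leadsto P_2$ in one step.
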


\begin{proof}

We proceed by case analysis. (Refer to Figure~\ref{fig:invariant_region}).

\begin{enumerate}[label=(\roman*)]

\item $P_2\in R_{ABCD}$: Then $P_2$ is the intersection of the curves $k_2x^{a_1}y^{b_1} = k_1x^{a'_1}y^{b'_1}$ and $k_4x^{a_2}y^{b_2} = k_3x^{a'_2}y^{b'_2}$ for some constants $\epsilon\leq k_1,k_2,k_3,k_4\leq\frac{1}{\epsilon}$. Choosing $k_1(t)=k_1,k_2(t)=k_2,k_3(t)=k_3,k_4(t)=k_4$, we get that $P$ is detailed balanced for these choice of rate constants. Noting that the dynamical system is two-dimensional, it follows from~\cite{craciun2013persistence} that $P_1\leadsto P_2$.

\item $P_2\in \mathcal{M}_{\epsilon}\setminus R_{ABCD}$: Without loss of generality, assume that the point $P_2$ lies in the region $OAD$ (Similar arguments will work in the other regions). Consider a trajectory $\vx(t)$ of this dynamical system with $\vx(0)=P_1$ and choose rate constants $k_1(t)= \epsilon, k_2(t)= \frac{1}{\epsilon},k_3(t)= \epsilon, k_4(t)= \frac{1}{\epsilon}$ as in Lemma~\ref{lem:global_attractors} so that $\displaystyle\lim_{t\to\infty}\vx(t) = D$. This implies that $P_1\leadsto D$. Now starting close to $D$, choose rate constants $k_1(t)= \epsilon,k_2(t)= \frac{1}{\epsilon},k_3(t)=\frac{1}{\epsilon} ,k_4(t)=\epsilon$, so that $\displaystyle\lim_{t\to\infty}\vx(t) = A$. Construct a line in the attracting direction of the blue uncertainty region that passes through $P_2$. Let this line intersect the curve $DA$ at point $Q$. Now starting close to $D$, choose rate constants $k_1(t)= \epsilon, k_2(t)= \frac{1}{\epsilon},k_3(t)=\frac{1}{\epsilon},k_4(t)=\epsilon$ and follow the trajectory till it reaches the point $Q$. Now set the rate constants of the reaction corresponding to the red uncertainty region such that $k_3(t)x^{a_2}y^{b_2} = k_4(t)x^{a'_2}y^{b'_2} $. This means the only vector field at point $Q$ is due to the attracting direction of the blue uncertainty region. Trace this trajectory till we get to the point $P_2$. From Remark~\ref{rem:continuous_dependence}, we get that $P_1\leadsto P_2$.
\end{enumerate}

\end{proof}

\begin{theorem}\label{thm:min_invariant_region}
$\mathcal{M}_{\epsilon}$ is the minimal invariant region for the dynamical system generated by the reaction network in Equation~(\ref{eq:variable-k}).
\end{theorem}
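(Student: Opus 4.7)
The plan is to combine the three propositions already established for $\mathcal{M}_{\epsilon}$: Proposition~\ref{prop:invariant} shows that $\mathcal{M}_{\epsilon}$ itself is a closed invariant region, so it remains only to show that every closed invariant region contains $\mathcal{M}_{\epsilon}$. The engine for this direction will be Proposition~\ref{prop:path_invariant}, which says that every point of $\mathcal{M}_{\epsilon}$ can be approached arbitrarily closely, via some choice of admissible rate constants, from any starting point in $\mathbb{R}^2_{>0}$.

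First I would fix an arbitrary closed invariant region $\Omega\subseteq\mathbb{R}^2_{>0}$ and pick any point $P_1\in\Omega$. Then, for an arbitrary target point $P_2\in\mathcal{M}_{\epsilon}$, I would invoke Proposition~\ref{prop:path_invariant} to obtain $P_1\leadsto P_2$. By the definition of $\leadsto$, there exists a solution $\vx(t)$ of $\GG_{\epsilon}^{\rm{variable\mbox{-}k}}$ with $\vx(0)=P_1$ and, for every $\xi>0$, a time $T_\xi$ with $\|\vx(T_\xi)-P_2\|\leq\xi$. Since $P_1\in\Omega$ and $\Omega$ is invariant under every admissible choice of time-varying rate constants (in particular the choice realizing this trajectory), we have $\vx(T_\xi)\in\Omega$ for every $\xi>0$. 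Taking $\xi\to 0$ gives a sequence in $\Omega$ converging to $P_2$, and since $\Omega$ is closed, $P_2\in\Omega$. As $P_2\in\mathcal{M}_{\epsilon}$ was arbitrary, this yields $\mathcal{M}_{\epsilon}\subseteq\Omega$, which is exactly the minimality property in the definition of a minimal invariant region.

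One subtle point I would want to address explicitly: in the application of Proposition~\ref{prop:path_invariant}, the construction of the trajectory from $P_1$ to a neighborhood of $P_2$ may itself be a concatenation of sub-trajectories obtained from different piecewise choices of rate constants (as in the proof of that proposition via Remark~\ref{rem:continuous_dependence}). Each such piece is a legitimate solution of $\GG_{\epsilon}^{\rm{variable\mbox{-}k}}$, so as long as $\Omega$ is invariant under every variable-$k$ solution and the concatenated path never leaves the positive orthant, the argument goes through. I would remark that the definition of invariant region applies to every variable-$k$ solution, so this is automatic.

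The main obstacle, such as it is, is really this bookkeeping: making sure that the near-approach to $P_2$ guaranteed by $\leadsto$ is genuinely realized by a trajectory starting at a point of $\Omega$, so that closedness of $\Omega$ can be used. Once that is pinned down, together with Proposition~\ref{prop:invariant} the theorem follows immediately, and I would close by stating that $\mathcal{M}_{\epsilon}$ is both a closed invariant region and contained in every closed invariant region, hence is the (unique) minimal invariant region.
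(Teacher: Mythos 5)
Your proposal follows exactly the same route as the paper's proof: invariance from Proposition~\ref{prop:invariant}, and minimality by using Proposition~\ref{prop:path_invariant} to show $\mathcal{M}_{\epsilon}$ is contained in every closed invariant region. The paper states this in two sentences; your version correctly fills in the closedness/limit argument that the paper leaves implicit.
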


\begin{proof}
Note that Proposition~\ref{prop:invariant} shows that $\mathcal{M}_{\epsilon}$ is an invariant region for the dynamical system generated by the reaction network in Equation~(\ref{eq:variable-k}). To show that $\mathcal{M}_{\epsilon}$ is the invariant region, we prove that $\mathcal{M}_{\epsilon}$ is contained in every invariant region of the dynamical system. This follows from 
Proposition~\ref{prop:path_invariant}. 

\end{proof}

\subsection{Cases (v)-(vi)}

The goal of this section is to construct the region $\mathcal{M}_{\epsilon}$ for cases (v) and (vi), where one reaction vector has positive slope and the other has negative slope. Figure~\ref{fig:four_cases} illustrates the uncertainty regions corresponding to cases (i)-(iv).

\begin{figure}[h!]

\begin{subfigure}[b]{.49\linewidth}
\includegraphics[width=\linewidth]{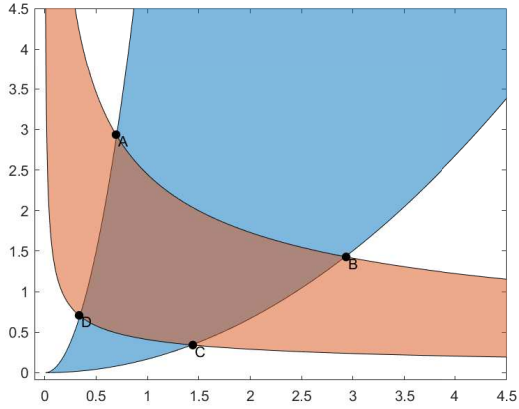}
\caption{Case (v).}\label{fig:case(v)}
\end{subfigure}
\begin{subfigure}[b]{.49\linewidth}
\includegraphics[width=\linewidth]{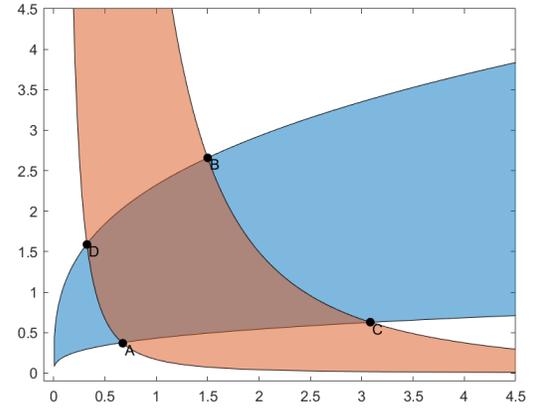}
\caption{Case (vi).}\label{fig:case(vi)}
\end{subfigure}

\caption{Uncertainty regions corresponding to two reversible reactions in case (v)-(vi).}\label{fig:last_two_cases}
\end{figure}

In what follows, we present the analysis of case (v); the analysis for case (vi) is completely analogous. \\

Consider the reaction network given in Equation~(\ref{eq:variable-k}). The dynamical system it generates is given by
\[\begin{pmatrix}\label{eq:dynamical_system_new}
\dot{x}\\
\dot{y} 
\end{pmatrix} = \left(k_1(t)x^{a_1}y^{b_1} - k_2(t)x^{a'_1}y^{b'_1}\right)\begin{pmatrix}
a'_1 - a_1\\
b'_1 - b_1
\end{pmatrix}
+ 
\left(k_3(t)x^{a_2}y^{b_2} - k_4(t)x^{a'_2}y^{b'_2}\right)\begin{pmatrix}
a'_2 - a_2\\
b'_2 - b_2
\end{pmatrix}\]

For convenience, we will denote $p_1=a_1 - a'_1, q_1=b'_1 - b_1, p_2=a_2 - a'_2, q_2=b'_2 - b_2$. Without loss of generality, assume that $p_1=a_1 - a'_1>q_1=b'_1 - b_1>0$, $p_2=a_2 - a'_2<0$, $q_2=b'_2 - b_2>0$. Consider the following intersection points as shown in Figure~\ref{fig:last_two_cases}.

\begin{enumerate}[label=(\roman*)]
\item $A: y^{q_1} =\frac{1}{\epsilon^2}x^{p_1}\, \text{and}\, y^{q_2} =\frac{1}{\epsilon^2}x^{p_2}$. 
\item $B: y^{q_1} = \epsilon^2x^{p_1}\, \text{and}\, y^{q_2} =\frac{1}{\epsilon^2}x^{p_2}$. 
\item $C: y^{q_1} = \epsilon^2x^{p_1}\, \text{and}\, y^{q_2} = \epsilon^2x^{p_2}$. 
\item $D: y^{q_1} =\frac{1}{\epsilon^2}x^{p_1}\, \text{and}\, y^{q_2} = \epsilon^2x^{p_2}$. 
\end{enumerate}

Table~\ref{table2} shows the slopes of the tangents to the boundary of the uncertainty regions at their intersection points. We split our analysis into three subcases depending on the sign of $p_1+p_2-q_1-q_2$.

\begin{table}[htbp]
	\centering 
	\caption{Slope of the tangents at the intersection of uncertainty regions in the limit $\epsilon\to 0$}  
	\label{table2}  
	\begin{tabular}{|c|c|}  
		\hline  
	 
		$m_{A}^{p_1,q_1}$ & $\frac{p_1}{q_1}\epsilon^{\frac{2(-p_1+p_2 + q_1 -q_2)}{p_1q_2 - p_2q_1}}$ \\
		
        \hline
        
        $m_{A}^{p_2,q_2}$ & $\frac{p_2}{q_2}\epsilon^{\frac{2(-p_1+p_2 + q_1 -q_2)}{p_1q_2 - p_2q_1}}$  \\
        
        \hline
        
        $m_{B}^{p_1,q_1}$ & $\frac{p_1}{q_1}\epsilon^{\frac{2(-p_1 - p_2 + q_1 + q_2)}{p_1q_2 - p_2q_1}}$ \\

        \hline

        $m_{B}^{p_2,q_2}$ &  $\frac{p_2}{q_2}\epsilon^{\frac{2(-p_1 - p_2 + q_1 + q_2)}{p_1q_2 - p_2q_1}}$ \\
        
        \hline
        
        $m_{C}^{p_1,q_1}$ & $\frac{p_1}{q_1}\epsilon^{\frac{2(p_1 - p_2 - q_1 + q_2)}{p_1q_2 - p_2q_1}}$ \\
        
        \hline

		$m_{C}^{p_2,q_2}$ & $\frac{p_2}{q_2}\epsilon^{\frac{2(p_1 - p_2 - q_1 + q_2)}{p_1q_2 - p_2q_1}}$ \\
		
		\hline
		
        $m_{D}^{p_1,q_1}$ &  $\frac{p_1}{q_1}\epsilon^{\frac{2(p_1 + p_2 - q_1 - q_2)}{p_1q_2 - p_2q_1}}$ \\
        
        \hline
        
        $m_{D}^{p_2,q_2}$  & $\frac{p_2}{q_2}\epsilon^{\frac{2(p_1 + p_2 - q_1 - q_2)}{p_1q_2 - p_2q_1}}$ \\

		\hline
		
\end{tabular}
\end{table}

\begin{itemize}

\item[Case (a):] $p_1+p_2-q_1-q_2<0$. 

In this case, note that $m^{p_2,q_2}_{D}\to -\infty$ and $m^{p_2,q_2}_{C}\to 0$ as $\epsilon\to 0$. Since the slope of the tangent to the lower red curve varies continuously as we traverse from along the curve $D$ to $C$, there exists a point $E$ at which the slope of the tangent to the red curve has the same slope as the attracting direction of the blue uncertainty region. We construct trajectories of this dynamical system starting from the point $E$ that go towards $C$ and $D$. We now claim that both these trajectories stay inside the region $R(CD)$. Figure~\ref{fig:trajectory_C_D} illustrates this point.

We show that the trajectory cannot cross the curve $OC$. For contradiction, assume that the trajectory intersects $OC$ at point $P$. Note that since the point $P$ lies on the boundary of the blue uncertainty region, the vector field at point $P$ is given by the red attracting direction which points towards the interior of the region $R_{CD}$. We now show that the trajectory cannot cross the curves $EC$ and $ED$. For contradiction, assume that the trajectory intersects $EC$ at point $P'$. Note that the slope of the tangents to the lower red curve increase monotonically from $E$ to $C$. Therefore, the net vector field at point $P'$ which is in the blue attracting direction points towards the interior of the region $R_{CD}$. A similar argument can be used to show that the trajectory cannot intersect the curve $ED$.

\begin{figure}[h!]
\centering
\includegraphics[scale=0.6]{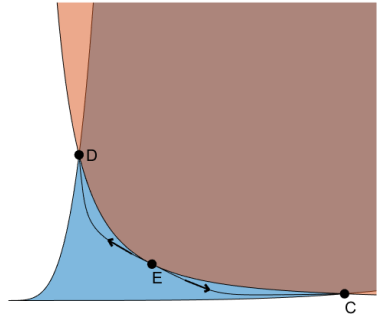}
\caption{In subcase (a), there exists a point $E$ on the curve from $D$ to $C$ where the slope of the tangent to the red curve has the same slope as the attracting direction of the blue uncertainty region. The boundary of $\mathcal{M}_{\epsilon}$ in this region is given by trajectories that start from $E$ and go towards $C$ and $D$.}
\label{fig:trajectory_C_D}
\end{figure}

We now show that the trajectory cannot cross the curve $OD$. One can calculate the coordinate of $E$ to be the following: $x_E=\left(-\frac{q_1 q_2}{p_1 p_2}\right)^{\frac{q_2}{p_2-q_2}} \epsilon^{-\frac{2}{p_2-q_2}},y_E=\left(-\frac{q_1q_2}{p_1p_2}\right)^{\frac{p_2}{p_2-q_2}}\epsilon^{-\frac{2}{p_2-q_2}}$. In the discussion that follows, please refer Figure~\ref{fig:not_going_outside}. Extend the tangent at $E$ so that it meets $OD$ at point $H$. Let $F$ be the point on the curve $OD$, where slope of the tangent is equal to the slope of the attracting direction corresponding to the red uncertainty region. The coordinate of $F$ is given by the following: $x_F=\bigg(-\frac{q_1 q_2}{p_1 p_2}\bigg)^{\frac{q_1}{p_1-q_1}} \epsilon^{\frac{2}{p_1-q_1}},y_F=\bigg(-\frac{q_1 q_2}{p_1 p_2}\bigg)^{\frac{p_1}{p_1-q_1}} \epsilon^{\frac{2}{p_1-q_1}-\frac{2}{p_1}+\frac{2}{q_1}}$. We now show that $y_F$ is lesser than the y-coordinate of the point $H$. Since $p_1+p_2-q_1-q_2<0$ and $p_1 >q_1$, we have $-\frac{2}{p_2-q_2}<\frac{2}{p_1-q_1}<\frac{2}{p_1-q_1}-\frac{2}{p_1}+\frac{2}{q_1}$. Therefore, we get $y_F-(-\frac{q_1}{p_1}) x_F-[y_E-(-\frac{q_1}{p_1}) x_E] =\frac{1}{p_1}[p_1 y_F + q_1 x_F - p_1 y_E - q_1 x_E]\to -\frac{1}{p_1}(p_1 y_E + q_1 x_E) \quad \text{as } \epsilon\to 0$. Note that $-\frac{1}{p_1}(p_1 y_E + q_1 x_E)<0$, therefore we get $y_F + \frac{q_1}{p_1} x_F < y_E+ \frac{q_1}{p_1} x_E$ as $\epsilon\to 0$.

\begin{figure}[h!]
\centering
\includegraphics[scale=0.6]{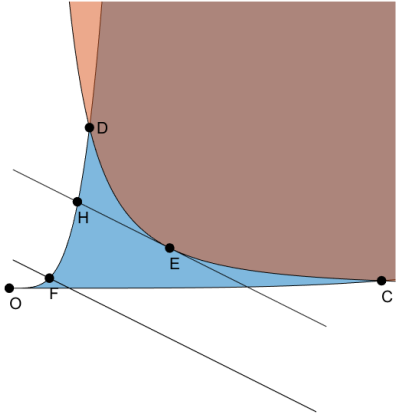}
\caption{In the same setting as in Fig.~\ref{fig:trajectory_C_D}, we now focus on relative positions of some important attracting directions lines. The point $F$ is chosen such that the slope of the tangent line to the curve $OD$ at $F$ is the same as the slope of the attraction direction of the red uncertainty region.}
\label{fig:not_going_outside}
\end{figure}

Given the cone formed by the attracting directions in region $EHD$, the trajectory always remains in the region $EHD$. Note that the slope of the tangents to the upper blue curve increases monotonically from $O$ to $D$, the red attracting direction will point towards the interior of the region $R_{CD}$ from $F$ to $D$. Suppose that the trajectory meets the curve $OD$ at the point $P'$. Since $y_F$ is lesser than the y-coordinate of the point $H$, we get that the red attracting direction will point towards the interior of the region $R_{CD}$ from $H$ to $D$. In particular, at $P'$, the vector field points towards the interior of the region $R_{CD}$. As a consequence, the trajectory cannot cross $OD$. 

\begin{figure}[h!]
\centering
\includegraphics[scale=0.6]{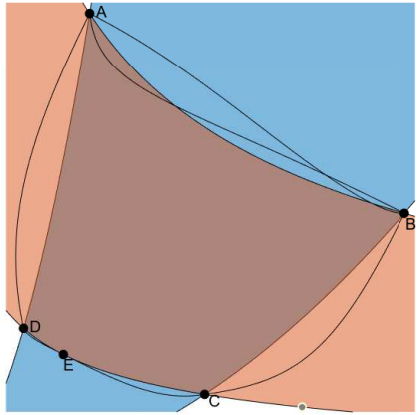}
\caption{Boundary of $\mathcal{M}_{\epsilon}$ for subcase(a) in cases (v)-(vi).}
\end{figure} 
    
We now prove that there exists trajectories from $A$ to $D$ and from $B$ to $C$ which stay inside the regions $R_{AD}$ and $R_{BC}$ respectively. Note that the slope of the tangents to the upper blue curve at $A$ and $D$ given by $m^{p_1,q_1}_{A}$ and $m^{p_1,q_1}_{D}$ satisfy $m^{p_1,q_1}_{A}\to \infty$ and $m^{p_1,q_1}_{D}\to \infty$ as $\epsilon\to 0$. Further, the slope of the tangents to the upper blue curve increase monotonically from $D$ to $A$. For contradiction, assume that the trajectory from $A$ to $D$ intersects the curve $AD$ at point $P$. Then the vector field at point $P$ is given by the attracting direction corresponding to the lower red curve, which points towards the interior of the region $R_{AD}$.

Note that inside the region $R_{AD}$, the trajectory from $A$ to $D$ is confined to the relevant cone formed by the attracting directions of the uncertainty regions. Therefore the trajectory cannot intersect the upper red curve from $A$ to above it. We now show that the trajectory also cannot intersect the lower red curve from $D$ to upwards. 
Further, the slopes of the tangents to the lower red curve decrease monotonically from $D$ to upwards. For contradiction, assume that the trajectory from $A$ to $D$ intersects the lower red curve from $D$ to upwards at point $P'$. The vector field at $P'$ is given by the blue attracting direction, which points towards the region $R_{AD}$. A similar argument can be made to show that the trajectory from $B$ to $C$ stays inside the region $R_{BC}$.

We now show how to construct the boundary of $\mathcal{M}_{\epsilon}$ in the region $R_{AB}$. Note that the slopes of the tangents to upper red curve satisfy $m^{p_2,q_2}_{A}\to-\infty$ and $m^{p_2,q_2}_{B}\to 0$ as $\epsilon\to 0$. Consider the two trajectory that starts at $A$ and ends at $B$ and the trajectory that starts at $B$ and ends at $A$. We will consider the outer union of these trajectories. We show that the intersection of these two trajectories cannot lie in the region $ABCD$. Note that in the limit $\epsilon\to 0$, on the curve $AB$, the slope of the tangent changes continuously on the interval $(-\infty,0)$. The blue attracting direction has a fixed negative slope given by $-\frac{p_1}{q_1}$. Therefore, both trajectories from the point $A$ to the point $B$ and from the point $B$ to the point $A$ will enter the blue uncertainty region. Let us assume that they intersect at point $M$. We will show that the trajectories $AM$ and $MB$ will form a part of the boundary of $\mathcal{M}_{\epsilon}$. To show this, we will prove that the point $M$ lies outside the region $ABCD$. The trajectories from $A$ to $B$ and from $B$ to $A$ are initially both outside the region $ABCD$. To enter the region $ABCD$, we need the slope of the tangent to the upper red curve to be greater than $-\frac{p_1}{q_1}$ for the trajectory from $A$ to $B$, and to be $<-\frac{p_1}{q_1}$ for the trajectory from $B$ to $A$. Since this cannot be achieved simultaneously, the intersection of the trajectories $AB$ and $BA$ cannot be inside the region $ABCD$.

\item[(b)] $p_1+p_2-q_1-q_2>0$. 
      
In this case, we have from Table~\ref{table2}, $m^{p_1,q_1}_{A}\to \infty, m^{p_2,q_2}_{A}\to -\infty, m^{p_1,q_1}_{B}\to \infty, m^{p_2,q_2}_{B}\to -\infty, m^{p_1,q_1}_{C}\to 0, m^{p_2,q_2}_{C}\to 0, m^{p_1,q_1}_{D}\to 0, m^{p_2,q_2}_{D}\to 0$ as $\epsilon\to 0$. This is analogous to the subcase (a) and we can show that for $\epsilon$ small enough, the boundary of $\mathcal{M}_{\epsilon}$ is given by the following trajectories:
\begin{enumerate}
\item From $A$ to $B$.
\item There exists a point $E$ on the curve $BC$ such that the slope of the tangent to the blue curve has the same slope as the attracting direction of the red uncertainty region. Now construct trajectories from $E$ to $B$ and $E$ to $C$.
\item From $C$ to $D$.
\item Outer union of the trajectories from $A$ to $D$ and $D$ to $A$.
\end{enumerate}

\item[(c)] $p_1+p_2-q_1-q_2=0$, this will be a combination of the previous situations.

From Table~\ref{table2}, we have the following: $m^{p_1,q_1}_B = m^{p_1,q_1}_D = \frac{p_1}{q_1}$ and $m^{p_2,q_2}_B = m^{p_2,q_2}_D = \frac{p_2}{q_2}$.  Let $E$ be a point on the lower red curve $CD$ such that the tangent at $E$ has the same slope as the blue attracting direction, and $E'$ be a point on the upper red curve $AB$ such that the tangent at $E'$ has the same slope as the blue attracting direction. Let us assume that the slope of the tangent on the lower red curve at point $C$ is $k$. Note that the slopes of the tangents decrease monotonically on the curve $CD$ in the range $[k,\frac{p_2}{q_2}]$. Similarly, on the upper curve $AB$, the slopes of the tangents decrease monotonically in the range $[\frac{p_2}{q_2},\frac{\lambda}{k}]$, where $\lambda$ is some positive constant. Therefore, there is at most one point $E$ or $E'$ on the red curves $AB$ or $CD$. When $\epsilon$ is small enough, we can construct the boundary of $\mathcal{M}_{\epsilon}$, where the upper trajectory between $A$ and $B$ is like case (a) while the lower trajectory between $C$ and $D$ is like case (b); or the upper trajectory between $A$ and $B$ is like case (b) while the lower trajectory between $C$ and $D$ is like case (a). Similarly on the blue curves $AD$ and $BC$, we have atmost one special point $F$ or $F'$, where the slope of the tangent is same as the slope of the red attracting direction. The construction of the boundary of $\mathcal{M}_{\epsilon}$ then proceeds in identical fashion as described above.

\end{itemize}

\begin{theorem}
Consider a dynamical system generated by Equation~(\ref{eq:variable-k}). Then, for $\epsilon$ small enough, $\mathcal{M}_{\epsilon}$ is the minimal invariant region.
\end{theorem}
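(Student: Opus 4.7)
The plan is to mirror the two-step proof of Theorem~\ref{thm:min_invariant_region}. First, I would show that $\mathcal{M}_{\epsilon}$ is invariant by checking that the vector field points into $\mathcal{M}_{\epsilon}$ along every arc of its boundary. Second, I would show minimality by proving that every point of $\mathcal{M}_{\epsilon}$ is reachable (in the $\leadsto$ sense) from every point of $\mathbb{R}^2_{>0}$, which forces any invariant region to contain $\mathcal{M}_{\epsilon}$.

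For invariance I would treat separately each boundary arc described in Section~\ref{subsec:partial}: in subcase (a), the trajectories $E\to C$, $E\to D$, $A\to D$, $B\to C$, and the outer union of the trajectories $A\to B$ and $B\to A$; analogously in (b); and the hybrid version in (c). Each such arc is the orbit produced by an extremal choice of rate constants (one of the vertices of $[\epsilon,1/\epsilon]^4$), so the Nagumo-type argument of Proposition~\ref{prop:invariant} applies: along the arc, the extremal vector field is tangent, hence pairs to zero with the inward normal $\mathbf{n}$, and for any admissible time-varying rates $k_i(t)\in[\epsilon,1/\epsilon]$ one checks term-by-term that each reaction contribution pairs with $\mathbf{n}$ to give an inner product no smaller than its extremal counterpart. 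The key geometric inputs that make those sign comparisons work are that $\vv_1\cdot\mathbf{n}$ and $\vv_2\cdot\mathbf{n}$ have definite signs along the arc, which follows from the confinement-to-a-cone arguments already exhibited in Section~\ref{subsec:partial}, analogous to Figure~\ref{fig:cone}. At the corner points $A,B,C,D,E$ and at the ``kink'' $M$ where two extremal trajectories meet, the cone of admissible velocities is contained in the tangent cone to $\mathcal{M}_{\epsilon}$, which is the Nagumo condition at non-smooth boundary points.

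For minimality I would adapt Proposition~\ref{prop:path_invariant}. For any target $P_2$ lying in the intersection of the two uncertainty regions, there exist constants in $[\epsilon,1/\epsilon]$ that make $P_2$ detailed balanced, and~\cite{craciun2013persistence} then yields $P_1 \leadsto P_2$ for any $P_1\in\mathbb{R}^2_{>0}$. For $P_2\in\mathcal{M}_{\epsilon}$ lying outside that intersection, I would first drive the trajectory close to one of the boundary corners using an extremal rate choice from the analogue of Lemma~\ref{lem:global_attractors} for cases (v)--(vi), and then zero out one reaction by choosing rates so that its mass-action difference vanishes along the appropriate attracting-direction line, while the other reaction pushes the state along that line to $P_2$. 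Concatenating these pieces via Remark~\ref{rem:continuous_dependence} gives $P_1\leadsto P_2$.

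The main obstacle is that the geometric picture of Section~\ref{subsec:partial} — the existence and location of the special points $E,E',F,F',M$, the slope monotonicities recorded in Table~\ref{table2}, and the claim that the outer-union trajectories meet outside $R_{ABCD}$ — is only guaranteed asymptotically as $\epsilon\to 0$. Making this rigorous requires quantitative versions of those slope estimates, so that for $\epsilon$ small enough the extremal trajectories really trace out the arcs as pictured (no unexpected self-intersections, the correct ordering of $y_F$ and the $y$-coordinate of $H$ as in Figure~\ref{fig:not_going_outside}, and the attracting-direction line through every point of $\mathcal{M}_{\epsilon}\setminus R_{ABCD}$ meeting the appropriate boundary arc at the right place). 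Once this geometry is pinned down uniformly in small $\epsilon$, the Nagumo argument and the concatenated-reachability argument go through exactly as in cases (i)--(iv), and invariance plus reachability together yield that $\mathcal{M}_{\epsilon}$ is the minimal invariant region.
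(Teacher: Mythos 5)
Your proposal follows essentially the same route as the paper, whose proof of this theorem is simply the statement that it ``proceeds in identical fashion to Proposition~\ref{prop:invariant} and Theorem~\ref{thm:min_invariant_region}'' --- i.e., the Nagumo inward-pointing argument on each extremal boundary arc for invariance, plus the $\leadsto$-reachability argument for minimality. Your write-up is in fact more explicit than the paper's (notably in handling the non-smooth corner points and in flagging that the asymptotic slope estimates of Table~\ref{table2} need to hold uniformly for small $\epsilon$), but it is the same argument.
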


\begin{proof}
The proof proceeds in identical fashion to Proposition~\ref{prop:invariant} and Theorem~\ref{thm:min_invariant_region}.
\end{proof}

In what follows next, we show that for $\epsilon$ small enough, $\mathcal{M}_{\epsilon}$ is also the minimal globally attracting region. Towards this, we need to analyze the points amongst $(A,B,C,D)$, that are end points of trajectories which form the boundary of $\mathcal{M}_{\epsilon}$. In particular, for every $\epsilon>0$, we are interested in the angle that the trajectories that form the boundary of $\mathcal{M}_{\epsilon}$ make when they meet at the globally attracting points. To make this analysis work, it is useful to linearize the dynamical system and study the eigenvalues of the corresponding Jacobian. The next proposition makes this precise.

The Jacobian corresponding to the dynamical system~(\ref{eq:variable-k}) is given by $J=\begin{pmatrix}
J_{11} & J_{12}\\
J_{21} & J_{22}
\end{pmatrix}$ 
where
\begin{eqnarray}\label{eq:jacobian}
\begin{split}
J_{11} &= (a'_1 - a_1)a_1k_1 x^{a_1-1}y^{b_1} - (a'_1 - a_1)a'_1k_2 x^{a'_1-1}y^{b'_1} + (a'_2 - a_2)a_2k_3 x^{a_2-1}y^{b_2} - (a'_2 - a_2)a'_2k_4 x^{a'_2-1}y^{b'_2}. \\
J_{12} &= (a'_1 - a_1)b_1k_1 x^{a_1}y^{b_1-1} - (a'_1 - a_1)b'_1k_2 x^{a'_1}y^{b'_1-1} + (a'_2 - a_2)b_2k_3 x^{a_2}y^{b_2-1} - (a'_2 - a_2)b'_2k_4 x^{a'_2}y^{b'_2-1}.\\
J_{21} &= (b'_1 - b_1)a_1k_1 x^{a_1-1}y^{b_1} - (b'_1 - b_1)a'_1k_2 x^{a'_1-1}y^{b'_1} + (b'_2 - b_2)a_2k_3 x^{a_2-1}y^{b_2} - (b'_2 - b_2)a'_2k_4 x^{a'_2-1}y^{b'_2}.\\
J_{22} &= (b'_1 - b_1)b_1k_1 x^{a_1}y^{b_1-1} - (b'_1 - b_1)b'_1k_2 x^{a'_1}y^{b'_1-1} + (b'_2 - b_2)b_2k_3 x^{a_2}y^{b_2-1} - (b'_2 - b_2)b'_2k_4 x^{a'_2}y^{b'_2-1}.\\
\end{split}
\end{eqnarray}

\begin{proposition}\label{prop:finite_equal_eigenvalues}
Consider case (i) and the trajectory from $D$ to $A$. (A similar analysis will apply to other cases). Let $J=\begin{pmatrix}
J_{11} & J_{12}\\
J_{21} & J_{22}
\end{pmatrix}$ be the Jacobian corresponding to the linearized dynamical system of this trajectory at point $A$. As $\epsilon$ varies, $J$ can only have equal eigenvalues at finitely many points.
\end{proposition}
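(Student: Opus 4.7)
The plan is to express the equal-eigenvalue locus as the zero set of a single scalar discriminant $\Delta(\epsilon)$, and then show that after a change of variable $\Delta$ becomes a Laurent polynomial with finitely many terms, which has only finitely many positive real roots (modulo a non-triviality check). Concretely, I would first solve for the coordinates of $A$ as explicit powers of $\epsilon$: taking logarithms of the two defining equations $x_A^{a'_1-a_1}y_A^{b'_1-b_1}=\epsilon^2$ and $x_A^{a'_2-a_2}y_A^{b'_2-b_2}=\epsilon^{-2}$ produces a linear system for $\log x_A,\log y_A$ in terms of $\log\epsilon$; since the two reaction vectors have distinct slopes in case (i), this system has full rank and yields $x_A=\epsilon^{\alpha}$, $y_A=\epsilon^{\beta}$ for specific rationals $\alpha,\beta$ determined by the stoichiometric data. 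Substituting these together with the prescribed rate constants $k_1=\epsilon,\ k_2=1/\epsilon,\ k_3=1/\epsilon,\ k_4=\epsilon$ into~(\ref{eq:jacobian}) makes each monomial $k_i x_A^{a}y_A^{b}$ a single rational power of $\epsilon$, so every entry $J_{ij}(\epsilon)$ is a finite $\mathbb{Q}$-linear combination of rational powers of $\epsilon$.

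The eigenvalues of $J$ then coincide exactly when
\[
\Delta(\epsilon):=(J_{11}-J_{22})^2+4J_{12}J_{21}
\]
vanishes, and by the previous paragraph $\Delta(\epsilon)$ is itself a finite $\mathbb{Q}$-linear combination of rational powers of $\epsilon$. Choosing a common denominator $N$ for all the exponents appearing and substituting $u=\epsilon^{1/N}$ turns $\Delta$ into a Laurent polynomial $P(u)\in\mathbb{R}[u,u^{-1}]$ with finitely many terms. Since a nonzero Laurent polynomial has only finitely many positive real roots, once $P\not\equiv 0$ is established the set $\{\epsilon>0:\Delta(\epsilon)=0\}$ is finite, which is the desired conclusion.

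The main obstacle is therefore verifying the non-triviality $P\not\equiv 0$. I would handle this via the detailed balance at $A$: the identities $k_1 x_A^{a_1}y_A^{b_1}=k_2 x_A^{a'_1}y_A^{b'_1}=:\phi_1(\epsilon)$ and $k_3 x_A^{a_2}y_A^{b_2}=k_4 x_A^{a'_2}y_A^{b'_2}=:\phi_2(\epsilon)$ allow one to rewrite $J=-M\,\mathrm{diag}(x_A^{-1},y_A^{-1})$, where $M=\phi_1\vv_1\vv_1^{\top}+\phi_2\vv_2\vv_2^{\top}$ is symmetric positive-definite (since $\vv_1,\vv_2$ are linearly independent, having distinct slopes). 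Conjugation by $\mathrm{diag}(\sqrt{x_A},\sqrt{y_A})$ shows $J$ is similar to a symmetric matrix, so $\Delta(\epsilon)\geq 0$, with $\Delta(\epsilon)=0$ iff that symmetric matrix is a scalar multiple of the identity. In case (i) the relevant off-diagonal numerator is $(a'_1-a_1)(b'_1-b_1)\phi_1+(a'_2-a_2)(b'_2-b_2)\phi_2$, a sum of two strictly negative quantities (both reaction vectors have negative slope), hence never zero; consequently $P\not\equiv 0$ and in fact $\Delta(\epsilon)>0$ for every $\epsilon>0$. For the analogous proposition in the remaining cases, non-triviality of $P$ can instead be checked by isolating a single dominant monomial in the Laurent expansion of $P(u)$ as $u\to 0$.
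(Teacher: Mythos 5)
Your proposal is correct, and its first half is essentially the paper's argument made precise: the paper simply observes that combining the equal-eigenvalue condition $(J_{11}+J_{22})^2=4(J_{11}J_{22}-J_{12}J_{21})$ with the equations defining $A$ yields a quasi-polynomial in $\epsilon$ with finitely many roots, which is exactly your reduction of the discriminant to a finite sum of rational powers of $\epsilon$ (equivalently, a Laurent polynomial in $u=\epsilon^{1/N}$). Where you genuinely go beyond the paper is the non-triviality check. The paper's one-line proof tacitly assumes the quasi-polynomial is not identically zero --- without that, ``finitely many roots'' does not follow --- and never verifies it. Your symmetrization argument closes this gap cleanly: writing $J=-M\,\mathrm{diag}(x_A^{-1},y_A^{-1})$ with $M=\phi_1\vv_1\vv_1^{\top}+\phi_2\vv_2\vv_2^{\top}$ positive definite, conjugating by $\mathrm{diag}(\sqrt{x_A},\sqrt{y_A})$, and noting that in case (i) the off-diagonal entry $\phi_1(a_1'-a_1)(b_1'-b_1)+\phi_2(a_2'-a_2)(b_2'-b_2)$ is a sum of two strictly negative terms shows the discriminant is strictly positive for every $\epsilon>0$, so the equal-eigenvalue set is not merely finite but empty in this case. (This is consistent with the paper's own later computation in Proposition~\ref{prop:fast_eigendirection} that $J_{12}>0$ and $J_{21}>0$ at $A$, which already forces $(J_{11}-J_{22})^2+4J_{12}J_{21}>0$.) The one place where your write-up is thinner than it should be is the remark that in the remaining cases non-triviality can be handled by ``isolating a single dominant monomial'': that is plausible but not carried out, and since the stated proposition concerns only case (i) it does not affect the correctness of your proof of the statement as given.
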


\begin{proof}
Note that for $J$ to have equal eigenvalues, it has to satisfy
\begin{eqnarray}\label{eq:equal_eigenvalues}
(J_{11} + J_{22})^2 = 4(J_{11}J_{22} - J_{12} J_{21})
\end{eqnarray}
From Lemma~\ref{lem:global_attractors}, the point $A$ is the intersection of the following curves
\begin{eqnarray}\label{eq:intersection_A}
\begin{split}
  \frac{1}{\epsilon}  x^{a'_1}y^{b'_1}  & =  \epsilon x^{a_1}y^{b_1}  \\
  \epsilon  x^{a_2}y^{b_2}  & = \frac{1}{\epsilon} x^{a'_2}y^{b'_2}  
\end{split}
\end{eqnarray}
Solving Equations~(\ref{eq:equal_eigenvalues}) and~(\ref{eq:intersection_A}), we get a quasi-polynomial equation in $\epsilon$, which has finitely many roots. Therefore, the number of points where $J$ has equal eigenvalues are finite.

\end{proof}

The next proposition says that for trajectories that form the boundary of $\mathcal{M}_{\epsilon}$, certain directions are forbidden.

\begin{proposition}\label{prop:fast_eigendirection}
Consider case (i) and the trajectory from $D$ to $A$. (A similar analysis will apply to other cases). Let $J=\begin{pmatrix}
J_{11} & J_{12}\\
J_{21} & J_{22}
\end{pmatrix}$ be the Jacobian corresponding to the linearized dynamical system of this trajectory at point $A$. Then the trajectory approaches the point $A$ along the slower (smaller in magnitude) eigendirection of the Jacobian $J$.
\end{proposition}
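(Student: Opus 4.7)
The plan is to combine the stable manifold theorem with the geometric confinement established in Proposition~\ref{lem:contained_uncertainty}.

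\emph{Step 1 (Setup and spectrum).} Under the rate constants fixed by Lemma~\ref{lem:global_attractors}(i), the ODE becomes autonomous, and $A$ is a detailed-balanced positive equilibrium, so each parenthesized reaction rate in~(\ref{eq:invariant_region}) vanishes at $A$. Writing $r_j^\ast > 0$ for the common value of the forward and reverse rates of reaction $j$ at $A$, a direct differentiation of~(\ref{eq:jacobian}) collapses to the rank-two representation
\[
J\big|_A \;=\; -\bigl(r_1^\ast \vv_1 \vv_1^{T} + r_2^\ast \vv_2 \vv_2^{T}\bigr)\,\Lambda, \qquad \Lambda = \operatorname{diag}(1/A_1,\,1/A_2).
\]
Since $\vv_1, \vv_2$ are linearly independent, the matrix $r_1^\ast \vv_1 \vv_1^{T} + r_2^\ast \vv_2 \vv_2^{T}$ is symmetric positive definite, and $\Lambda^{1/2} J \Lambda^{-1/2}$ is symmetric negative definite. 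Hence $J$ has two real negative eigenvalues $\lambda_{\rm fast} \leq \lambda_{\rm slow} < 0$.

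\emph{Step 2 (Strong stable manifold decomposition).} By Proposition~\ref{prop:finite_equal_eigenvalues}, outside a finite set of values of $\epsilon$ the inequality is strict, $\lambda_{\rm fast} < \lambda_{\rm slow} < 0$, and the corresponding eigenvectors $\mathbf{e}_{\rm fast}, \mathbf{e}_{\rm slow}$ are well-defined up to sign. The stable manifold theorem then produces a $C^1$ one-dimensional strong stable manifold $W^{ss}(A)$ through $A$ tangent to $\mathbf{e}_{\rm fast}$, with the property that every solution converging to $A$ but not contained in $W^{ss}(A)$ approaches $A$ tangent to $\mathbf{e}_{\rm slow}$.

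\emph{Step 3 (Main obstacle: excluding $W^{ss}(A)$).} It remains to show that the trajectory from $D$ to $A$ is not contained in $W^{ss}(A)$. By the cone argument of Proposition~\ref{lem:contained_uncertainty}, this trajectory is confined to $\overline{R_{DA}}$ and its velocity vector lies everywhere in the open cone $\mathcal{K}$ spanned by $\vv_1$ and $\vv_2$; consequently any limiting direction of approach to $A$ must lie in $\overline{\mathcal{K}}$. Using the rank-two form from Step~1, one can solve explicitly for $\mathbf{e}_{\rm fast}$ and $\mathbf{e}_{\rm slow}$ in the basis $\{\vv_1, \vv_2\}$ and verify, by a slope comparison against $\vv_1$ and $\vv_2$, that $\mathbf{e}_{\rm slow}$ lies in the interior of $\mathcal{K}$ while $\mathbf{e}_{\rm fast}$ lies strictly outside $\overline{\mathcal{K}}$. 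Since $W^{ss}(A)$ is tangent to $\mathbf{e}_{\rm fast}$ at $A$, both local branches of $W^{ss}(A)$ exit $R_{DA}$; hence the trajectory from $D$ cannot lie on $W^{ss}(A)$ and therefore approaches $A$ tangent to $\mathbf{e}_{\rm slow}$.

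The main difficulty is Step~3: producing the direction inequality $\mathbf{e}_{\rm fast} \notin \overline{\mathcal{K}}$ in a way that is robust in $\epsilon$. A clean fall-back, if the slope computation is delicate, is a contradiction argument: were the trajectory on $W^{ss}(A)$, then by uniqueness of the strong stable manifold the trajectory would be tangent to $\mathbf{e}_{\rm fast}$ at $A$; but confinement to $R_{DA}$ forces this tangent direction into $\overline{\mathcal{K}}$, which contradicts the eigenvector computation afforded by the rank-two structure of $J|_A$ together with the nondegeneracy guaranteed by Proposition~\ref{prop:finite_equal_eigenvalues}.
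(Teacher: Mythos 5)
Your architecture coincides with the paper's: the conjugation $\Lambda^{1/2}J\Lambda^{-1/2}$ in Step~1 is exactly the paper's $P^{-1}JP$ with $P=\operatorname{diag}(\sqrt{c^*_1},\sqrt{c^*_2})$ (the detailed-balance inner product cited from Feinberg), and your rank-two formula $J|_A=-(r_1^*\vv_1\vv_1^{T}+r_2^*\vv_2\vv_2^{T})\Lambda$ is a correct, usefully explicit version of it; Step~2 makes precise the paper's one-line remark that the trajectory approaches along the slow direction unless it lies on the fast one; and Step~3 is the same exclusion-by-confinement idea resting on Proposition~\ref{lem:contained_uncertainty}.

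The gap is that the decisive inequality of Step~3, $\mathbf{e}_{\rm fast}\notin\overline{\mathcal{K}}$, is asserted rather than proved, and your ``fall-back'' is circular: it appeals again to ``the eigenvector computation afforded by the rank-two structure,'' which is precisely what is missing. Moreover, with $\mathcal{K}$ literally the cone spanned by $\vv_1$ and $\vv_2$ the claim is false in exactly the case (i) geometry: take $\Lambda=I$, $r_1^*=r_2^*$, $|\vv_1|=|\vv_2|$; then the eigenvectors of $r_1^*\vv_1\vv_1^{T}+r_2^*\vv_2\vv_2^{T}$ are the two angle bisectors, and the fast direction of $J$ (most negative eigenvalue, i.e.\ largest eigenvalue of the rank-two sum) is the \emph{internal} bisector whenever the angle between $\vv_1$ and $\vv_2$ is acute --- which it is when both reaction slopes are negative --- so it lies \emph{inside} the cone they span. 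The cone that actually confines the velocity of the $D\to A$ trajectory is spanned by $\vv_1$ and $-\vv_2$ (in the proof of Proposition~\ref{prop:invariant} one has $\vv_1\cdot\mathbf{n}>0>\vv_2\cdot\mathbf{n}$, so the two reaction vectors lie on opposite sides of the trajectory), and the slope comparison must be made against that cone. The paper closes exactly this gap by a sign computation: detailed balance at $A$ gives $\tfrac{1}{\epsilon}x^{a_1}y^{b_1}=\epsilon x^{a'_1}y^{b'_1}$ and $\tfrac{1}{\epsilon}x^{a_2}y^{b_2}=\epsilon x^{a'_2}y^{b'_2}$, the case (i) conditions give $(a'_1-a_1)(b_1-b'_1)>0$ and $(a'_2-a_2)(b_2-b'_2)>0$, hence $J_{12},J_{21}>0$ and so $J^*_{12}=J^*_{21}>0$ for the symmetrized matrix; the explicit eigenvector formula for a symmetric $2\times 2$ matrix then places the fast eigenvector in the second/fourth quadrant, a location preserved by the diagonal conjugation back to $J$ and incompatible with where Proposition~\ref{lem:contained_uncertainty} confines the trajectory. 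You need to supply this (or an equivalent) sign argument for Step~3 to stand.
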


\begin{proof}

Note that the the trajectory approaches the point $A$ along the slower(smaller in magnitude) eigendirection unless it lies on the faster (larger in magnitude) eigendirection. We show that the trajectory cannot approach the point $A$ along the faster eigendirection. In particular, we show that the faster eigendirection lies in the second or fourth quadrant centred at $A$ (refer to Figure~\ref{fig:cone}), which is forbidden by Proposition~\ref{lem:contained_uncertainty}.  

It is known~\cite[Theorem 14.3.4]{feinberg2019foundations}  that given a detailed balanced dynamical system with a positive steady state $c^*$, the Jacobian is symmetric with respect to the inner product given by $u* w = \frac{u_1 w_1}{c^*_1} + \frac{u_2 w_2}{c^*_2}$. There is a change of basis transformation that takes the Jacobian $J$ in the standard basis to the Jacobian $J^*$ that is symmetric with respect to this inner product, given by $J^* = P^{-1}J P$ where $P=\begin{pmatrix}
\sqrt{c^*_1} & 0 \\
0 & \sqrt{c^*_2}
\end{pmatrix}$
Note that signs of each element of $J$ is unchanged by this transformation. Using Lemma~\ref{lem:global_attractors} and the Jacobian given by Equation~(\ref{eq:jacobian}), the off-diagonal elements $J_{12}$ and $J_{21}$ of the Jacobian at $A$ are given by the following
\begin{itemize}
\item $J_{12} = (a'_1 - a_1)b_1\frac{1}{\epsilon} x^{a_1}y^{b_1-1} - (a'_1 - a_1)b'_1 \epsilon x^{a'_1}y^{b'_1-1} + (a'_2 - a_2)b_2 \frac{1}{\epsilon} x^{a_2}y^{b_2-1} - (a'_2 - a_2)b'_2 \epsilon x^{a'_2}y^{b'_2-1}.$
\item $J_{21} = (b'_1 - b_1)a_1\frac{1}{\epsilon} x^{a_1-1}y^{b_1} - (b'_1 - b_1)a'_1\epsilon x^{a'_1-1}y^{b'_1} + (b'_2 - b_2)a_2\frac{1}{\epsilon} x^{a_2-1}y^{b_2} - (b'_2 - b_2)a'_2\epsilon x^{a'_2-1}y^{b'_2}$
\end{itemize}
At point $A$, we have $\frac{1}{\epsilon} x^{a_1}y^{b_1} = \epsilon x^{a'_1}y^{b'_1}$ and $\frac{1}{\epsilon} x^{a_2}y^{b_2} = \epsilon x^{a'_2}y^{b'_2}$. Since we are in case (i), we have $(a'_1 - a_1)(b_1 - b'_1) >0$ and $(a'_2 - a_2)(b_2 - b'_2) >0$. Therefore, we get $J_{12}>0$ and $J_{21}>0$. This implies that $J^*_{12} = J^*_{21}>0$. The eigenvector corresponding to the smaller eigenvalue for a symmetric $2\times 2$ matrix is given by $e_1 =\begin{pmatrix}
\frac{J^*_{11} - J^*_{22} - \Delta}{2J^*_{12}} \\ 1
\end{pmatrix}$
where $\Delta = \sqrt{(J^*_{11} - J^*_{22})^2 + 4{J^*}^2_{12}}$. Since $\frac{J^*_{11} - J^*_{22} + \Delta}{2J^*_{12}}<0$, this eigenvector points either in the second or fourth quadrant. Transforming this eigenvector to the standard basis using $P^{-1}e_1$ does not change the sign of the elements of the eigenvector. Therefore, the vector corresponding to the faster eigendirection lies either in the second or fourth quadrant centred at $A$, and we are done.
\end{proof}

\begin{theorem}\label{thm:globally_attracting}
Consider a dynamical system generated by Equation~(\ref{eq:variable-k}). Then, for $\epsilon$ small enough, $\mathcal{M}_{\epsilon}$ is a globally attracting region.
\end{theorem}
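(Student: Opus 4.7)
The plan is to combine the invariance of $\mathcal{M}_{\epsilon}$ with a \emph{uniform inward drift} estimate outside $\mathcal{M}_{\epsilon}$, supplemented by an eigendirection analysis near the corner attractors. By the two-dimensional extended permanence result of Craciun-Nazarov-Pantea cited in the introduction, every variable-$k$ trajectory is eventually trapped in a compact subset of $\mathbb{R}^2_{>0}$, so $\omega(\vx(0))$ is a nonempty compact subset of $\mathbb{R}^2_{>0}$. Since $\mathcal{M}_{\epsilon}$ is forward-invariant by Proposition~\ref{prop:invariant}, it suffices to show that no $\omega$-limit point lies outside $\mathcal{M}_{\epsilon}$.

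The central step is an inward drift claim: for every $p\in\mathbb{R}^2_{>0}\setminus\mathcal{M}_{\epsilon}$ and every admissible rate vector in $[\epsilon,1/\epsilon]^4$, the vector field at $p$ has a nontrivial component pointing toward $\mathcal{M}_{\epsilon}$. I would prove this by a case split on which, if any, of the two uncertainty regions contains $p$. If $p$ lies outside both uncertainty regions, then each reaction contributes with a definite sign and the cone of admissible velocities is inward by the same convex combination argument used to prove Proposition~\ref{prop:invariant} (see Equations~(\ref{eq:1})--(\ref{eq:3})). If $p$ lies inside exactly one uncertainty region, the adversary can force that reaction's contribution to vanish, but the remaining reaction has a fixed sign and, using Proposition~\ref{lem:contained_uncertainty} to locate $\mathcal{M}_{\epsilon}$ within the union $R_{AB}\cup R_{BC}\cup R_{CD}\cup R_{DA}\cup R_{ABCD}$, one checks geometrically that this remaining reaction vector still crosses $\partial\mathcal{M}_{\epsilon}$ inward. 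A standard compactness argument then upgrades this pointwise estimate to a uniform drift of the form $\frac{d}{dt}\operatorname{dist}(\vx(t),\mathcal{M}_{\epsilon})\leq -\mu_K<0$ on any compact $K\subset\mathbb{R}^2_{>0}\setminus\operatorname{int}(\mathcal{M}_{\epsilon})$ bounded away from the corners, which forces every trajectory to enter any prescribed neighborhood of $\mathcal{M}_{\epsilon}$ in finite time.

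The main obstacle is the behavior near the four corners $A,B,C,D$, where two uncertainty regions overlap and both reaction contributions can be driven to zero simultaneously, so the uniform drift estimate above degenerates. This is precisely where Propositions~\ref{prop:finite_equal_eigenvalues} and~\ref{prop:fast_eigendirection} are needed. By Proposition~\ref{prop:finite_equal_eigenvalues}, for all but finitely many values of $\epsilon$ the linearization at each corner has distinct eigenvalues, so by taking $\epsilon$ small enough we may assume strict hyperbolicity at $A,B,C,D$; and by Proposition~\ref{prop:fast_eigendirection}, the admissible boundary trajectories approach each corner along the slow eigendirection, which is tangent to $\partial\mathcal{M}_{\epsilon}$ rather than the transversal fast direction that would allow escape from a small neighborhood of the corner. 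Combining the uniform inward drift away from the corners with this eigendirection information near them rules out any accumulation of $\omega$-limit points just outside $\mathcal{M}_{\epsilon}$, and together with the forward-invariance of $\mathcal{M}_{\epsilon}$ this yields $\omega(\vx(0))\subseteq\mathcal{M}_{\epsilon}$.
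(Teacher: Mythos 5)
Your overall architecture (invariance plus a strict inward drift outside $\mathcal{M}_{\epsilon}$, with special care at the corners via Propositions~\ref{prop:finite_equal_eigenvalues} and~\ref{prop:fast_eigendirection}) matches the paper's in spirit, but your central step has a genuine gap: the claim that for \emph{every} point $p\notin\mathcal{M}_{\epsilon}$ and \emph{every} admissible choice of rates the velocity strictly decreases $\operatorname{dist}(\vx(t),\mathcal{M}_{\epsilon})$. The Euclidean distance to $\mathcal{M}_{\epsilon}$ is not adapted to this differential inclusion. Take $p$ on the boundary of the slightly larger region $\mathcal{M}_{\epsilon'}$ (same construction, $\epsilon'<\epsilon$), on the arc that is itself an extremal trajectory from $D'$ to $A'$. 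That arc's tangent direction is an admissible velocity at $p$, it is orthogonal to the inward normal of $\partial\mathcal{M}_{\epsilon'}$, and since the nested boundaries are close, the nearest point of $\mathcal{M}_{\epsilon}$ lies essentially along that normal; so for this admissible control $\frac{d}{dt}\operatorname{dist}(\vx,\mathcal{M}_{\epsilon})\approx 0$ and can have either sign depending on curvature. Your uniform bound $-\mu_K<0$ therefore fails, and the compactness upgrade collapses. The same problem infects your case ``$p$ inside exactly one uncertainty region'': there the adversary can make the ambiguous reaction's coefficient large with either sign, so the admissible velocity set spans more than a half-plane and certainly contains vectors that do not decrease the Euclidean distance.

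What the paper does instead, and what you are missing, is to replace $\operatorname{dist}(\cdot,\mathcal{M}_{\epsilon})$ by the level function $\Gamma$ of the foliation of $\mathbb{R}^2_{>0}$ by the nested family $\{\partial\mathcal{M}_{\epsilon'}\}_{\epsilon'\leq\epsilon_0}$, with $\Gamma=1/\epsilon'$ on $\partial\mathcal{M}_{\epsilon'}$. Because each leaf is built from extremal trajectories, the convex-combination computation of Proposition~\ref{prop:invariant} (Equations~(\ref{eq:1})--(\ref{eq:3})) applies with the normal of the leaf \emph{through $p$}, and the remark following that proposition gives \emph{strict} inwardness on every leaf with $\epsilon'<\epsilon_0$; this is exactly the uniform drift you want, measured against the right hypersurfaces. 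The price is that $\Gamma$ is only piecewise smooth across the corner curves $C_j(\epsilon')$, which is why the paper invests in the nonsmooth-analysis machinery (lower-$C^1$ functions, subgradient chain rule, nonsmooth mean value theorem) in Case I, and uses Proposition~\ref{prop:fast_eigendirection} in Case II not to prevent ``escape'' but to guarantee that the two boundary trajectories meet at the attracting corners tangentially along the common slow eigendirection, so $\Gamma$ is differentiable there except for the finitely many $\epsilon'$ excluded by Proposition~\ref{prop:finite_equal_eigenvalues}, which are then handled by thin annular neighborhoods. To repair your proof you would need to either substitute this foliation level function for the distance function, or prove a genuinely new estimate showing the distance function is nonetheless a viable nonsmooth Lyapunov function; as written, the drift claim is false for some admissible controls.
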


\begin{proof}
Let $\epsilon_0$ be small enough so that the region $\mathcal{M}_{\epsilon_0}$ can be constructed according to the procedure described in Section 2. Note that $\mathcal{M}_{\epsilon}$ varies continuously as a function of $\epsilon$. In addition, we have  $\displaystyle\bigcup_{\epsilon\in(0,\epsilon_0]}\mathcal{M}_{\epsilon}=\mathbb{R}^2_{>0}$. Let $\zeta$ be small enough so that the region $\mathcal{M}_{\epsilon_0 + \zeta}$ can still be constructed. Let $\vx(t)$ be a solution of $\GG_{\epsilon}^{\rm{variable\mbox{-}k}}$ with $\vx(0)\in\mathbb{R}^2_{>0}$. Since $\displaystyle\bigcup_{\epsilon\in(0,\epsilon_0+\zeta]}\mathcal{M}_{\epsilon+\zeta}=\mathbb{R}^2_{>0}$, one can choose $\epsilon_1$ with $0<\epsilon_1<\epsilon_0$ such that $\vx(0)\in\displaystyle\bigcup_{\epsilon\in[\epsilon_1,\epsilon_0+\zeta]}\mathcal{M}_{\epsilon}$. We will prove that $\vx(t)\in \mathcal{M}_{\epsilon_0}$ for a large enough $t$.

Towards this, let $\partial\mathcal{M}_{\epsilon}$ denote the boundary of $\mathcal{M}_{\epsilon}$. Define a function $\Gamma:\displaystyle\bigcup_{\epsilon\in[\epsilon_1,\epsilon_0+\zeta]}\partial\mathcal{M}_{\epsilon}\rightarrow \bigg[\frac{1}{\epsilon_0+\zeta},\frac{1}{\epsilon_1}\bigg]$ so that $\Gamma(x,y)=\frac{1}{\epsilon}$ if $(x,y)\in\mathcal{M}_{\epsilon}$. We will show that $\Gamma(\vx(t))\leq\frac{1}{\epsilon_0}$ for a large enough $t$. Let us assume that this is not true. By Proposition~\ref{prop:invariant}, we know that the sets $\Gamma^{-1}\bigg(0,\frac{1}{\epsilon_0}\bigg]=\mathcal{M}_{\epsilon_0}$ and $\Gamma^{-1}\bigg(0,\frac{1}{\epsilon_1}\bigg]=\mathcal{M}_{\epsilon_1}$ are invariant. This implies that $\Gamma(\vx(t))\in[\frac{1}{\epsilon_0},\frac{1}{\epsilon_1}]$ for all $t\geq 0$.\\

Note that the function $\Gamma$ is differentiable everywhere except \emph{maybe} on boundary of $\mathcal{M}_{\epsilon}$, where trajectories end or where trajectories can start or intersect. We will handle these cases separately. We  will denote the curve that contains such points where $\Upsilon$ is not differentiable by $C_j(\epsilon)$. \\

Case I: Consider points on the boundary of $\mathcal{M}_{\epsilon}$ where trajectories can start or intersect. Note that in this case, the angle made by $\mathcal{M}_{\epsilon}$ along $C_j$ is always greater than $\pi$ no matter what $\epsilon$ is (this follows from analyzing cases (v) and (vi)). We will use some machinery from convex analysis. Towards this, for each curve $C_j$, let $\Upsilon_{j1}$ and $\Upsilon_{j2}$ be two functions such that $\Upsilon=\Upsilon_1$ on one side of $C_j$ and $\Upsilon=\Upsilon_2$ on the other side. We now consider the following cases. We let $-\Upsilon(\vx)=\max(-\Upsilon_{j1}(\vx),-\Upsilon_{j2}(\vx))$ in a neighbourhood of the curve $C_j$. Defining $\Upsilon(\vx)$ this way ensures that $\Upsilon(\vx)$ is lower $C^1$~\cite{rockafellar2009variational,rockafellar1970convex}. The subgradient of $\Upsilon(\vx)$ along $C_j(\epsilon)$ is given by
\begin{eqnarray}\label{eq:convexity_function}
\partial\Upsilon(\vx) = \{\gamma\nabla\Upsilon_{j1}(\vx) + (1-\gamma)\Upsilon_{j2}(\vx)\ |\ \gamma\in[0,1]\}.
\end{eqnarray}
Using the continuity of $\Upsilon$, we can apply the chain rule of gradients~\cite[Theorem 10.6]{rockafellar2009variational} to get
\begin{eqnarray}\label{eq:chain_rule_subgradient}
\partial(\Upsilon\circ\vx)(t)\subset\{\z\cdot\dot{\vx}(t)\,|\,\z\in\partial\Upsilon(\vx(t))\}.
\end{eqnarray}
Note that Proposition~\ref{prop:invariant} shows that $\mathcal{M}_{\epsilon}$ is invariant, i.e., the vector field along its boundary points towards the interior of $\mathcal{M}_{\epsilon}$. Consider a compact neighbourhood $\mathcal{K}$ of the curve $C_j$. Since $\mathcal{K}$ is compact, there is a $\delta>0$ such that $\dot{\vx}\cdot\nabla\Upsilon_{j1}<-\delta$ and $\dot{\vx}\cdot\nabla\Upsilon_{j2}<-\delta$ on $\mathcal{K}$. From~(\ref{eq:convexity_function}), we get that there exists a $\delta>0$ such that $\z\cdot \dot{\vx}(t)<-\delta<0$ for all $\z\in\partial\Upsilon(\vx(t))$ in $\mathcal{K}$. Using~(\ref{eq:chain_rule_subgradient}), we get
\begin{eqnarray}
\sup\limits_{t\geq 0}\partial (\Upsilon\circ\vx)(t)<-\delta.
\end{eqnarray}
Since $\Upsilon$ is lower $C^1$, one can apply the mean value theorem~\cite[Theorem 10.48]{rockafellar2009variational} to $\Upsilon\circ\vx(t)$ to get that there is a $\tau\in[0,t]$
\begin{eqnarray}
\displaystyle\Upsilon(\vx(t))-\Upsilon(\vx(0)) = t\alpha_t  \text{ for some } \alpha_t\in \partial (\Upsilon\circ\vx)(\tau).
\end{eqnarray} 
Since $\alpha_t < \delta$, this implies that on $\mathcal{K}$, we have
\begin{eqnarray}
\Upsilon(\vx(t)) < \Upsilon(\vx(0)) - \delta t
\end{eqnarray}
for all $t>0$. This contradicts the fact that $\Upsilon(\vx(t))\in[\frac{1}{\epsilon_0},\frac{1}{\epsilon_1}]$ for all $t> 0$.\\

Case II: Consider points on the boundary of $\mathcal{M}_{\epsilon}$ that are end points of trajectories. In this case, the angle made by $\mathcal{M}_{\epsilon}$ along $C_j$ can be equal to or different from $\pi$ depending on whether the eigenvalues of the Jacobian are equal or not. From Proposition~\ref{prop:finite_equal_eigenvalues}, we know that the set of points when the eigenvalues of the Jacobian are equal is finite. Let $(\epsilon_1,\epsilon_2,....,\epsilon_k)$ be the set such that for each $\epsilon_i$ in $(\epsilon_1,\epsilon_2,....,\epsilon_k)$, the boundary of $\mathcal{M}_{{\epsilon}_i}$ contains end points of trajectories where the Jacobian has equal eigenvalues. For each such $\epsilon_i$, contruct a small enough annular region around $\mathcal{M}_{{\epsilon}_i}$. Note that since the annular region is a compact set, by continuity there exists a $\delta_0$ such that $\dot{\vx}\cdot\Delta\Upsilon<-\delta_0$. Between the annular regions, the function $\Upsilon(\vx)$ is $C^1$. Therefore there exists a $\delta_1$ such that $\dot{\vx}\cdot\Delta\Upsilon<-\delta_1$. Therefore, we have $\dot{\vx}\cdot\Delta\Upsilon< \min(-\delta_0,-\delta_1)$. We can now repeat the procedure as in Case I to get our desired conclusion. The only case that remains to be resolved when we have distinct eigenvalues is when we start along the faster eigen direction. However, this case does not occur due to Proposition~\ref{prop:fast_eigendirection}.

\end{proof}

\begin{theorem}
Consider a dynamical system $\GG_{\epsilon}^{\rm{variable\mbox{-}k}}$. Then $\mathcal{M}_{\epsilon}$ is the minimal globally attracting region for $\GG_{\epsilon}^{\rm{variable\mbox{-}k}}$.
\end{theorem}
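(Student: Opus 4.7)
The plan is to combine Theorem~\ref{thm:globally_attracting}, which gives that $\mathcal{M}_{\epsilon}$ is a globally attracting region for $\epsilon$ small enough, with the reachability statement of Proposition~\ref{prop:path_invariant}. It remains to verify minimality: if $\Omega$ is any globally attracting region, then $\mathcal{M}_{\epsilon}\subseteq\Omega$. I would fix $P\in\mathcal{M}_{\epsilon}$ and an arbitrary $P_1\in\mathbb{R}^2_{>0}$, and aim to construct a single solution $\vx(t)$ of $\GG_{\epsilon}^{\rm{variable\mbox{-}k}}$ with $\vx(0)=P_1$ such that $P\in\omega(\vx(0))$. Once this is achieved, the definition of a globally attracting region immediately gives $P\in\omega(\vx(0))\subseteq\Omega$, and since $P$ was arbitrary, $\mathcal{M}_{\epsilon}\subseteq\Omega$.

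The construction of such a trajectory splits into the two cases of Proposition~\ref{prop:path_invariant}. If $P\in R_{ABCD}$, Lemma~\ref{lem:global_attractors} exhibits constant rate constants $k_i\in[\epsilon,1/\epsilon]$ making $P$ a detailed balanced steady state, and the two-dimensional detailed-balance result cited in Lemma~\ref{lem:global_attractors} then gives $\lim_{t\to\infty}\vx(t)=P$, so $P\in\omega(\vx(0))$ trivially. If instead $P\in\mathcal{M}_{\epsilon}\setminus R_{ABCD}$, the construction in the proof of Proposition~\ref{prop:path_invariant} only yields $P_1\leadsto P$, which is a priori weaker than $P\in\omega(\vx(0))$ because it provides a time $T$ for each tolerance $\xi$ but not an unbounded sequence of such times.

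To close this gap I would iterate the construction. Fix a sequence $\xi_h=2^{-h}$. Apply Proposition~\ref{prop:path_invariant} with initial condition $P_1$ to obtain a piecewise rate-constant profile on $[0,T_1]$ whose trajectory ends within distance $\xi_1$ of $P$; then apply the same proposition with initial condition $\vx(T_1)\in\mathbb{R}^2_{>0}$ to extend the profile on $[T_1,T_2]$ so that $\vx(T_2)$ lies within $\xi_2$ of $P$, choosing $T_2>T_1+1$. Iterating produces one global solution with $\vx(T_h)\to P$ along an increasing unbounded sequence $T_h$, hence $P\in\omega(\vx(0))$. Splicing the individual arcs is legitimate because $\GG_{\epsilon}^{\rm{variable\mbox{-}k}}$ only requires the rate constants to lie in $[\epsilon,1/\epsilon]$ pointwise in $t$, so any concatenation of admissible rate-constant profiles is itself admissible.

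The main obstacle I anticipate is precisely this upgrade from the relation $\leadsto$ to omega-limit membership. Reachability is a finite-time statement, while $\omega$-limit membership is asymptotic, so one must verify that each intermediate arc in the splicing takes a strictly positive amount of time and that the approximation tolerances $\xi_h$ can in fact be attained uniformly as the starting point drifts around $P$ in the ball of radius $\xi_{h-1}$. The continuous dependence of solutions on initial conditions (Remark~\ref{rem:continuous_dependence}) is the technical ingredient that prevents the small perturbations introduced when taking $\vx(T_h)$ as a new initial condition from destroying the inductive construction, thereby guaranteeing that the concatenated trajectory is defined on $[0,\infty)$ and genuinely accumulates at $P$.
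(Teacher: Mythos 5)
Your proposal is correct and follows essentially the same route as the paper: both establish that $\mathcal{M}_{\epsilon}$ is globally attracting via Theorem~\ref{thm:globally_attracting} and then prove minimality by upgrading the finite-time reachability relation $\leadsto$ of Proposition~\ref{prop:path_invariant} to omega-limit membership through an iterated concatenation of admissible trajectory arcs with shrinking tolerances. The only cosmetic difference is that the paper runs the iteration by shuttling between an auxiliary point $P_2$ and the target $Q$ rather than aiming directly at the target at each stage, and it does not separate out the $R_{ABCD}$ case as you do; neither difference affects the substance of the argument.
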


\begin{proof}
Theorem~\ref{thm:globally_attracting} shows that $\mathcal{M}_{\epsilon}$ is a globally attracting region. We now show that it is the minimal globally attracting region, i.e., it is contained in every globally attracting region. Towards this, we will show that each point in $\mathcal{M}_{\epsilon}$ lies in the omega-limit set of some trajectory of $\GG_{\epsilon}^{\rm{variable\mbox{-}k}}$. In particular, let $Q\in\mathcal{M}_{\epsilon}$. We will show that $Q$ lies in the omega-limit point of some trajectory of $\GG_{\epsilon}^{\rm{variable\mbox{-}k}}$. Consider $P_2\in\mathcal{M}_{\epsilon}$ such that $P_2\neq Q$. From Proposition~\ref{prop:path_invariant}, we have $P_1\leadsto P_2$ for any $P_1\in\mathbb{R}^2_{>0}$. Choose some $\eta_1 >0$. Then there exists a time $t_1$ and trajectory $\vx(t)$ with $\vx(0)=P_2$ such that $||\vx(t_1) - Q||<\eta_1$. Choose $\eta'_1>0$. Using Proposition~\ref{prop:path_invariant} again, we get that there exists a time $t'_1 > t_1$ and a trajectory starting at $\vx(t_1)$ such that $||\vx(t'_1) - P_2||< \eta'_1$. Now choose $\eta_2 >0$ such that $\eta_2 < \eta_1$. Using Proposition~\ref{prop:path_invariant} again, we get that there exists a time $t_2 > t'_1$ and trajectory starting at $\vx(t'_1)$ such that $||\vx(t_2) - Q||<\eta_2$. Repeating this between the points $P_2$ and $Q$ generates a trajectory $\vx(t)$ and a sequence of times $t_1 < t_2 < ... < t_k$ such that $\displaystyle\lim_{k\to\infty}\vx(t_k) = Q$, implying that $Q$ lies in the omega-limit of this trajectory.

\end{proof}

\section{Discussion}

In this paper, we have constructed minimal invariant regions and minimal globally attracting regions for variable-$k$ dynamical systems generated by networks possessing two reversible reactions. In this special case, the minimal invariant region coincides with the minimal globally attracting region. Of course, these regions  are also invariant and globally attracting regions for the corresponding fixed-$k$ mass-action systems.

In previous work~\cite{ding2020minimal} we have constructed minimal invariant regions and minimal globally attracting regions for general toric differential inclusions~\cite{craciun2019quasi,craciun2015toric} in two dimensions. Therefore, since large classes of mass-action systems can be {\em embedded}~\cite{craciun2020endotactic,craciun2019polynomial} into toric differential inclusions, this provides some invariant regions and some globally attracting regions (but not necessarily minimal ones) for many variable-$k$ and fixed-$k$ mass-action systems with any number of reactions, even if they are not reversible, as long as they can be embedded into toric differential inclusions. In particular, this applies to all weakly reversible and to all endotactic networks in two dimensions.

We have only considered here variable-$k$ dynamical systems with {\em two} reversible reactions; this is the simplest nontrivial case for this class of problems, and we regard the results obtained here as a proof-of-concept for future work in this area. Numerical simulations suggest that the analysis of the more general case with arbitrary number of reversible reactions can be significantly more complicated. Similarly, numerical simulations for the construction of minimal invariant regions and minimal globally attracting regions for fixed-$k$ dynamical systems suggest that this problem might also be quite difficult, in general. We think that these are very interesting avenues for future work.

\bibliographystyle{amsplain}
\bibliography{Bibliography}

\end{document}